\documentclass{article}
\usepackage{amsmath, amssymb, amsthm}
\usepackage{url}
\usepackage{hyperref}
\usepackage{authblk}

\newtheorem{theorem}{Theorem}     
\newtheorem{lemma}{Lemma}   
\newtheorem{proposition}{Proposition}   
\newtheorem{corollary}{Corollary}   
\theoremstyle{definition}
\newtheorem{remark}{Remark}   

\begin{document}

\title { \textbf{Sharp Bounds on Diminished Sombor Index}}

\author[1]{Meysam Taheri-Dehkordi\thanks{Corresponding author.}}
\author[2]{Amir Hossein Nokhodkar}
\author[2]{Gholam Hossein Fath-Tabar}

\affil[1]{University of Applied Science and Technology, Tehran, Iran} 
\vspace{0.3cm}
\affil[2]{Department of Pure Mathematics, Faculty of Mathematical Sciences,\\ University of Kashan, Kashan 87317- 53153, I. R. Iran}

\date{}
\maketitle

\vspace{-0.5cm}
\begin{center}
\footnotesize
\textit{Email:} {m.taheri@uast.ac.ir}, {a.nokhodkar@kashanu.ac.ir}, {fathtabar@kashanu.ac.ir}
\end{center}
\vspace{0.5cm}

\begin{abstract}
The Diminished Sombor index $(DSO)$ of a simple graph $G$ is a recently introduced degree-based topological index, defined as 
\[ 
DSO(G)=
\sum_{uv\in E(G)}
\frac{\sqrt{d_u^2+d_v^2}}
     {d_u+d_v}
\]
In this paper, we establish a collection of new sharp bounds for the $DSO$ index in terms of several fundamental graph parameters and well-known topological indices. In all cases, the proposed bounds rigorously specify the equality conditions, thereby providing a complete characterization of the extremal graphs. These findings deepen the theoretical understanding of the $DSO$ index and provide powerful analytical tools for interdisciplinary applications in mathematical chemistry.
\end{abstract}
\textbf{Keywords:}
Topological indices, Diminished Sombor index, mathematical chemistry. 

\textbf{Mathematics Subject Classification:}
05C09, 05C90, 05C92.

\section{Introduction}
Suppose $G=(V(G),E(G))$ is a simple, connected graph, where $|V(G)|=n$ and $|E(G)|=m$ represent the order and the size of $G$. The degree of each vertex $u$ is denoted by $d_u$, while the maximum and minimum degrees of the graph are denoted by $\Delta$ and $\delta$, respectively. For additional graph-theoretic terminology and notation not explicitly defined here, we refer the reader to \cite{7, 8, 9}. In graph theory, a graph invariant is defined as a numerical quantity that remains unchanged under graph isomorphism. Within the domain of chemical graph theory, such invariants are conventionally referred to as topological indices. Graph invariants find extensive application in chemistry as molecular descriptors. Within the framework of chemical graph theory, numerous degree-based graph invariants have been proposed and subsequently employed in various studies. Among these, the First Zagreb index $M_1$ and the Second Zagreb index $M_2$ are two of the oldest and most well-known indices \cite{10, 11}. For a given graph $G$, these indices are formally defined as follows: 
\[
\displaystyle M_1(G)=\sum_{v\in V(G)} d_v^2 = \sum_{uv\in E(G)}(d_u+d_v),\qquad \displaystyle M_2(G)=\sum_{uv\in E(G)}d_u d_v.
\]
In the following, we present the definitions of the topological indices that are of particular interest in this paper.
The Randić index was proposed by Milan Randić in 1975 \cite{18}, and is defined as
\[
\displaystyle R(G)=\sum_{uv\in E(G)} \frac{1}{\sqrt{d_u d_v}}.
\]
The Geometric-arithmetic index is defined in \cite{19} as
\[
\displaystyle GA(G)=\sum_{uv\in E(G)}\frac{2\sqrt{d_ud_v}}{d_u+d_v}.
\]
The Albertson index, also known as the third Zagreb index \cite{21}, was introduced in \cite{20} and is defined in the following manner:
\[
\displaystyle Alb(G)=\sum_{uv\in E(G)}|d_u-d_v|.
\]
In addition to the Albertson index, another natural irregularity measure, sigma irregularity index, was proposed in \cite{29} and is defined below:
\[
\displaystyle \sigma(G)=\sum_{uv\in E(G)} (d_u-d_v)^2.
\]
The Harmonic index is defined in \cite{22} as
\[
\displaystyle H(G)=\sum_{uv\in E(G)}\frac{2}{d_u+d_v}.
\]
In \cite{23} the inverse sum index is defined as
\[
\displaystyle ISI(G)=\sum_{uv\in E(G)}\frac{d_u d_v}{d_u+d_v}.
\]
The forgotten index is defined in \cite{24} as
\[
\displaystyle F(G)=\sum_{uv\in E(G)}(d_u^2+d_v^2).
\]
The total irregularity index was introduced by Abdo et al. \cite{25} and is defined as
\[
\displaystyle irr_t(G)=\sum_{\{u,v\}\subseteq V(G)} |d_u-d_v|.
\]
The index $HM(G)$, referred to in the paper as the Hyper-Zagreb index, was introduced by Shirdel et al \cite{26}, and defined as
\[
\displaystyle HM(G)=\sum_{uv\in E(G)}(d_u+d_v)^2.
\]
A new topological index, known as the Sombor index, was introduced by Gutman \cite{28}. The Sombor index of a graph $G$ is defined as
\[
\displaystyle SO(G)=\sum_{uv\in E(G)}\sqrt{d_u^2+d_v^2}.
\]
For further details on the Sombor index and its variants, the reader is referred to \cite{14, 15, 16, 17}. The significant progress and expansion of research on the Sombor index have led to the creation of a growing family of related indices and new variants of Sombor index.
 
  Recently, a modification of the Sombor index was introduced as follows \cite{13}:
 \[
 \displaystyle DSO(G)=\sum_{uv\in E(G)}\frac{\sqrt{d_u^2+d_v^2}}{d_u+d_v}.
 \]
 In reference \cite{3}, this index is referred to as the Diminished Sombor index. The authors established bounds for this index. In \cite{30}, the author identified the tricyclic graph with a prescribed number of vertices that maximizes the $DSO$ index, and proceeded to analyze its characteristic structural properties. In a study by Guo and Wang \cite{31}, the extremal value of this index among molecular trees possessing a perfect matching was established, and the corresponding extremal structures were fully characterized. 
 A series of bounds relating the $DSO$ index to several well-established degree-based topological indices were derived in \cite{5}. More recently, the behavior of the $DSO$ index has been examined for graphs of fixed order and cyclomatic number at least three \cite{33}. Although the work in \cite{5} compared the $DSO$ index with some other topological indices, in this paper we provide a far more comprehensive comparison and identify several improved sharp bounds. This is the main reason for the length of the present paper. In particular, wherever a bound in \cite{5} admitted a sharper form, we have established such a refinement, and we fully characterize the equality cases. These new results substantially advance the current understanding of the $DSO$ index and contribute to a deeper theoretical insight into this invariant.

\section{Comparison with Structural Parameters}
In this section, new bounds for the $DSO$ index are presented in terms of structural graph parameters.
All graphs considered in this paper are connected.
\begin{theorem}\label{Delta-delta}
Let $G$ be a graph with $m>0$ edges, and minimum degree $\delta\ge 1$. Then
\[
DSO(G) \le m\,\frac{\sqrt{\Delta^{2}+\delta^{2}}}{\Delta+\delta}.
\]
Equality holds if and only if $G$ is either regular  or bipartite semiregular with part degrees $\Delta$ and $\delta$.
\end{theorem}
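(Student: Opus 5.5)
The plan is to bound each edge term separately and then sum over the $m$ edges. For an edge $uv$, write $t=\max\{d_u,d_v\}/\min\{d_u,d_v\}\ge 1$. The summand is symmetric and homogeneous of degree zero, so
\[
\frac{\sqrt{d_u^2+d_v^2}}{d_u+d_v}=f(t),\qquad f(t)=\frac{\sqrt{1+t^2}}{1+t}.
\]
The first step is to show that $f$ is strictly increasing on $[1,\infty)$. A direct derivative computation gives
\[
f'(t)=\frac{t(1+t)-(1+t^2)}{(1+t)^2\sqrt{1+t^2}}=\frac{t-1}{(1+t)^2\sqrt{1+t^2}},
\]
which is positive for $t>1$. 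Hence $f$ is strictly increasing on $[1,\infty)$.

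The second step uses $\delta\le d_u,d_v\le\Delta$, which gives $t\le \Delta/\delta$. By monotonicity,
\[
f(t)\le f(\Delta/\delta)=\frac{\sqrt{\Delta^2+\delta^2}}{\Delta+\delta}.
\]
Summing this over all $m$ edges yields the stated inequality.

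For the equality case, strict monotonicity means equality holds if and only if every edge has ratio exactly $\Delta/\delta$. If $\Delta=\delta$, this is automatic, so $G$ is regular. If $\Delta>\delta$, every edge must join a vertex of degree $\Delta$ to a vertex of degree $\delta$. Let $A$ and $B$ be the sets of vertices of degree $\Delta$ and $\delta$. No edge lies inside $A$ or inside $B$. Any vertex of intermediate degree would have to be incident to an edge, since $G$ is connected with $m>0$, and such an edge would not be a $\Delta$–$\delta$ edge. So there are no intermediate degrees, and $G$ is bipartite with parts $A$ and $B$ and part degrees $\Delta$ and $\delta$. The converse, that regular graphs and such semiregular bipartite graphs attain equality, is immediate because every edge term then equals the bound.

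The only real obstacle is confirming that $f$ is strictly monotone, and the derivative above settles it. The remaining care goes into the connectivity and no-isolated-vertex argument that excludes intermediate-degree vertices in the equality case.
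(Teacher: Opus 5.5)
Your proof is correct and follows essentially the same route as the paper: you write each edge term as $f(t)=\sqrt{1+t^2}/(1+t)$ with $t\ge 1$ the degree ratio, show via the same derivative that $f$ is increasing, bound the term by $f(\Delta/\delta)$, sum over the edges, and get equality exactly when $\{d_u,d_v\}=\{\Delta,\delta\}$ on every edge. Your equality analysis is slightly more explicit than the paper's, since you rule out intermediate-degree vertices. The one step you leave implicit is that $t=\Delta/\delta$ forces $\max\{d_u,d_v\}=\Delta$ and $\min\{d_u,d_v\}=\delta$; this follows from $\max\{d_u,d_v\}/\min\{d_u,d_v\}\le \Delta/\min\{d_u,d_v\}\le \Delta/\delta$.
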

\begin{proof}
Let $f(x)=\sqrt{1+x^2}/(1+x)$ for $x\ge 1$. Since
\[
f'(x)=\frac{x-1}{(1+x)^2\sqrt{1+x^2}}>0 \quad\text{for } x>1,
\]
$f$ is increasing on $[1,\infty)$. For every edge $uv\in E(G)$, assume without loss of generality $d_u\ge d_v$ and set $x=d_u/d_v$. Since $\delta\le d_v\le d_u\le \Delta$, we have $1\le x\le \Delta/\delta$, and
\[
\frac{\sqrt{d_u^2+d_v^2}}{d_u+d_v}=f(x)\le f(\Delta/\delta)=\frac{\sqrt{\Delta^2+\delta^2}}{\Delta+\delta}.
\]
Summing over all edges gives
\begin{equation}\label{eqn-step1}
DSO(G)\le m\cdot\frac{\sqrt{\Delta^2+\delta^2}}{\Delta+\delta},
\end{equation}
with equality if and only if $\{d_u,d_v\}=\{\Delta,\delta\}$ for every edge (i.e.\ $G$ is regular, or $G$ is bipartite semiregular with part degrees $\Delta$ and $\delta$).
\end{proof}
\begin{remark}
The upper bound of Theorem~\ref{Delta-delta} improves the bound
\[
DSO(G)\le \frac{\sqrt{2}}{2}m(\Delta-\delta+1).
\]
from Corollary 3.3 of \cite{5}. 
This follows from the following elementary inequality:
\[
\frac{\sqrt{\Delta^2+\delta^2}}{\Delta+\delta}
\le \frac{\sqrt{2}}{2}\,(\Delta-\delta+1),
\]
with equality if and only if $\Delta=\delta$.
Indeed, if $\Delta=\delta$, both sides equal $1/\sqrt{2}$. If $\Delta>\delta$, then $\Delta-\delta+1\ge 2$, so the right-hand side is at least $\sqrt{2}$, while the left-hand side is always strictly less than $1$ (since $\sqrt{\Delta^2+\delta^2}<\Delta+\delta$). Hence the inequality is strict in this case.
\end{remark}
Let $G$ be a  graph, and let $\lambda_1$ denote the largest eigenvalue of $G$.
It is well known that $m\le \frac{n\lambda_1}{2}$, with equality if and only if $G$ is regular (see \cite{4}). Hence, using Theorem \ref{Delta-delta}, one concludes that
\begin{corollary}
Let $G$ be a graph with largest eigenvalue $\lambda_1$. Then
\[
DSO(G)\le \frac{n\lambda_1}{2}\cdot\frac{\sqrt{\Delta^2+\delta^2}}{\Delta+\delta}.
\]
Equality holds if and only if $G$ is a regular graph.
\end{corollary}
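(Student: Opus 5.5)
The plan is to chain Theorem~\ref{Delta-delta} with the spectral edge bound $m\le \frac{n\lambda_1}{2}$ quoted just above from \cite{4}. By Theorem~\ref{Delta-delta},
\[
DSO(G)\le m\,\frac{\sqrt{\Delta^2+\delta^2}}{\Delta+\delta}.
\]
The factor $\frac{\sqrt{\Delta^2+\delta^2}}{\Delta+\delta}$ is strictly positive. Hence we may multiply the inequality $m\le \frac{n\lambda_1}{2}$ by it without reversing direction. This gives
\[
DSO(G)\le \frac{n\lambda_1}{2}\cdot\frac{\sqrt{\Delta^2+\delta^2}}{\Delta+\delta},
\]
which is the stated bound.

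For the equality case, both inequalities in the chain must be equalities, since the multiplier is positive. Equality in $m\le n\lambda_1/2$ forces $G$ to be regular, by the cited result. Conversely, suppose $G$ is $r$-regular. Then $\lambda_1=r$, so $n\lambda_1/2=nr/2=m$. Also every edge contributes exactly $\frac{\sqrt{2}r}{2r}=\frac{1}{\sqrt2}$, so Theorem~\ref{Delta-delta} holds with equality. Therefore equality holds if and only if $G$ is regular.

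The only point needing care is the equality characterization. Theorem~\ref{Delta-delta} also admits equality for bipartite semiregular graphs, and these must be excluded. The spectral step already does this: it is strict for every non-regular connected graph, including semiregular bipartite ones. So no case analysis beyond intersecting the two equality conditions is needed.
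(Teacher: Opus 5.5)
Your proof is correct and is essentially the paper's argument: chain Theorem~\ref{Delta-delta} with the spectral bound $m\le n\lambda_1/2$ (equality iff $G$ is regular), using that the factor $\frac{\sqrt{\Delta^2+\delta^2}}{\Delta+\delta}$ is positive. Your explicit equality analysis, including the observation that the spectral step already excludes the non-regular bipartite semiregular graphs, spells out what the paper leaves implicit.
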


\begin{theorem}\label{new:pendant}
Let $G$ be a graph with at least one non-pendant vertex.
Let $p$ be the number of pendant vertices and let $\delta_1$ be the minimum degree among non-pendant vertices.
Then
\[
DSO(G) \le p\,\frac{\sqrt{\Delta^2+1}}{\Delta+1}
+ (m-p)\,\frac{\sqrt{\Delta^2+\delta_1^2}}{\Delta+\delta_1}.
\]
Equality holds if and only if every edge incident to a pendant vertex has its other endpoint of degree $\Delta$, and every edge between two non-pendant vertices joins a $\Delta$-vertex to a $\delta_1$-vertex.
\end{theorem}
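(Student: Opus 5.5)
The plan is to split the edge set of $G$ into two classes and bound each edge term with the monotone function $f(x)=\sqrt{1+x^2}/(1+x)$ from the proof of Theorem~\ref{Delta-delta}. That proof shows $f$ is strictly increasing on $[1,\infty)$. For an edge $uv$ with $d_u\ge d_v$, the edge term is $f(d_u/d_v)$.

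First I will count the pendant edges. Since $G$ is connected and has a non-pendant vertex, no edge joins two pendant vertices; otherwise $G=K_2$, which has no non-pendant vertex. Hence each of the $p$ pendant vertices lies on exactly one edge, and these edges are distinct. So there are exactly $p$ pendant edges and $m-p$ edges joining two non-pendant vertices.

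Next I bound the pendant edges. For such an edge $uv$ with $d_v=1$, we have $d_u\ge 2>1$, and the term equals $f(d_u)\le f(\Delta)=\sqrt{\Delta^2+1}/(\Delta+1)$. Equality holds if and only if $d_u=\Delta$, by strict monotonicity.

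For an edge between non-pendant vertices with $d_u\ge d_v$, we have $\delta_1\le d_v\le d_u\le\Delta$. Thus $1\le d_u/d_v\le \Delta/\delta_1$, and the term is at most $f(\Delta/\delta_1)=\sqrt{\Delta^2+\delta_1^2}/(\Delta+\delta_1)$. Equality holds if and only if $d_u/d_v=\Delta/\delta_1$. Given the range constraints, this forces $d_u=\Delta$ and $d_v=\delta_1$. (If $\Delta=\delta_1$, every ratio equals $1$; the condition then reads "joins a $\Delta$-vertex to a $\delta_1$-vertex" trivially.)

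Summing the two classes gives the bound. Equality holds exactly when every term attains its bound, which is the stated characterization.

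The only mildly delicate point is the equality analysis in the non-pendant case: one must note that $d_u/d_v=\Delta/\delta_1$ together with $d_u\le\Delta$ and $d_v\ge\delta_1$ pins down both degrees. This follows from $d_u/d_v\le \Delta/d_v\le \Delta/\delta_1$, where both inequalities must be tight. The edge-counting step also needs the connectivity assumption stated at the start of the section.
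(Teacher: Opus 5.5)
Your proposal is correct and follows the paper's proof. Like the paper, you split $E(G)$ into pendant edges and edges between non-pendant vertices, bound each edge term by the increasing function $f$ from Theorem~\ref{Delta-delta}, and obtain equality edge by edge. Two of your steps fill in details the paper leaves implicit: the check that there are exactly $p$ pendant edges (using connectivity to rule out $G\cong K_2$), and the argument that $d_u/d_v=\Delta/\delta_1$ forces $d_u=\Delta$ and $d_v=\delta_1$.
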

\begin{proof}
Separate the edge set into $E_1$ (edges incident to a pendant) and $E_2$ (edges between non-pendants). On $E_1$, one endpoint has degree $1$ and the other at most $\Delta$; by the monotonicity used in Theorem~\ref{Delta-delta}, each such edge contributes at most $\frac{\sqrt{\Delta^2+1}}{\Delta+1}$. On $E_2$, both degrees are between $\delta_1$ and $\Delta$, so each edge contributes at most $\frac{\sqrt{\Delta^2+\delta_1^2}}{\Delta+\delta_1}$. Summation yields the claim. 
Equality holds if and only if equality holds for every edge in both
families $E_1$ and $E_2$, which is equivalent to the stated degree
conditions.
\end{proof}
\begin{remark}
The hypothesis that $G$ has at least one non-pendant vertex guarantees that $\delta_1$ (the minimum degree among non-pendant vertices) is well defined and that $m-p$ (the number of edges joining two non-pendant vertices) is nonnegative.
The only connected graph violating this condition is $K_2$, for which the statement would not be meaningful (as $m-p=-1$ and $\delta_1$ does not exist).
Hence the theorem applies to all connected graphs of order at least $3$.
\end{remark}
\begin{remark}
Let
\[
f(x)=\frac{\sqrt{\Delta^2+x^2}}{\Delta+x},\qquad 1\le x\le \Delta.
\]
A direct differentiation gives
\[
f'(x)=\frac{\Delta(x-\Delta)}{\sqrt{\Delta^2+x^2}\,(\Delta+x)^2}\le 0,
\]
so $f$ is nonincreasing on $[1,\Delta]$ (strictly decreasing for $x<\Delta$).

Now assume that $G$ is connected, has pendant vertices, and possesses at least one non-pendant vertex (so $\delta=1$ and $\delta_1$ is well defined).
The bound of Theorem~\ref{Delta-delta} is $B_1 = m\,f(1)$.
The bound of Theorem~\ref{new:pendant} is
$
B_2 = p\,f(1)+(m-p)\,f(\delta_1),
$
where $p$ is the number of pendant vertices (hence also the number of pendant edges) and $\delta_1\ge 2$.
Because $f$ is decreasing, $f(\delta_1) < f(1)$.
Therefore
\[
B_2 = p f(1)+(m-p) f(\delta_1) \le p f(1)+(m-p) f(1) = m f(1) = B_1,
\]
with equality if and only if $m-p=0$, i.e., every edge is incident to a pendant vertex.
For a connected graph with a non-pendant vertex, this happens exactly when $G$ is a star $K_{1,m}$ (which forces $m\ge 2$).
Thus the bound of Theorem~\ref{new:pendant} is always at least as good as that of Theorem~\ref{Delta-delta} under the stated hypotheses, and it is strictly sharper whenever $G$ is not a star.
\end{remark}

\section{Comparison with Topological Indices}

In this section, we establish several new bounds for the $DSO$ index in terms of other well-known topological indices.

\begin{theorem}\label{DSO-M2-Delta}
Let $G$ be a  graph with $m>0$ edges. Then
\[
DSO(G)\le \sqrt{m^2-\frac{m}{2\Delta^2} M_2(G)}.
\]
Equality holds if and only if $G$ is a regular graph.
\end{theorem}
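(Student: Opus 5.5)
The plan is to write each edge term as a square root, apply the Cauchy--Schwarz inequality over the edges, and then bound the resulting sum from below using $\Delta$.

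\textbf{Step 1: rewrite each summand.} For every edge $uv$,
\[
\left(\frac{\sqrt{d_u^2+d_v^2}}{d_u+d_v}\right)^{2}
=\frac{(d_u+d_v)^2-2d_ud_v}{(d_u+d_v)^2}
=1-\frac{2d_ud_v}{(d_u+d_v)^2}.
\]

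\textbf{Step 2: Cauchy--Schwarz.} Applying the Cauchy--Schwarz inequality to the $m$ edge terms gives
\[
DSO(G)^2\le m\sum_{uv\in E(G)}\frac{d_u^2+d_v^2}{(d_u+d_v)^2}
= m\left(m-\sum_{uv\in E(G)}\frac{2d_ud_v}{(d_u+d_v)^2}\right).
\]
Equality in this step holds if and only if all edge terms $\sqrt{d_u^2+d_v^2}/(d_u+d_v)$ are equal.

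\textbf{Step 3: bound the subtracted sum using $\Delta$.} For every edge, $d_u+d_v\le 2\Delta$, so $(d_u+d_v)^2\le 4\Delta^2$. Hence
\[
\frac{2d_ud_v}{(d_u+d_v)^2}\ge \frac{2d_ud_v}{4\Delta^2}=\frac{d_ud_v}{2\Delta^2}.
\]
Summing over all edges gives $\sum_{uv}\frac{2d_ud_v}{(d_u+d_v)^2}\ge \frac{M_2(G)}{2\Delta^2}$. Substituting this into Step 2 yields $DSO(G)^2\le m^2-\frac{m}{2\Delta^2}M_2(G)$. Taking square roots gives the bound. The right-hand side is positive, since each summand of $DSO$ is positive, so the square root is well defined.

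\textbf{Step 4: equality.} Equality in Step 3 forces $d_u+d_v=2\Delta$ for every edge, so $d_u=d_v=\Delta$ on every edge. Since $G$ is connected, $G$ is $\Delta$-regular, and the equality condition of Step 2 then holds automatically. Conversely, if $G$ is $\Delta$-regular, then $DSO(G)=m/\sqrt2$ and $M_2(G)=m\Delta^2$. The right-hand side is then $\sqrt{m^2-m^2/2}=m/\sqrt2$, so equality holds.

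The only point needing care is the equality analysis. There one must note that equality in Step 3 alone already forces regularity, which makes the Cauchy--Schwarz equality condition automatic. The inequalities themselves are routine.
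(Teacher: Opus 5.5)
Your proposal is correct and matches the paper's proof essentially step for step. Both arguments apply Cauchy--Schwarz, use the identity $\frac{d_u^2+d_v^2}{(d_u+d_v)^2}=1-\frac{2d_ud_v}{(d_u+d_v)^2}$ and the bound $d_u+d_v\le 2\Delta$, and derive regularity from equality in that last step, which makes the Cauchy--Schwarz equality condition automatic.
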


\begin{proof}
By the Cauchy–Schwarz inequality,
$
\left(\sum_{i=1}^m x_i\right)^2 \le m\sum_{i=1}^m x_i^2.
$
Set
$x_{uv}=\frac{\sqrt{d_u^2+d_v^2}}{d_u+d_v}.
$
Then
\begin{equation}\label{eqn12}
DSO(G)^2
\le 
m\sum_{uv\in E(G)} \frac{d_u^2+d_v^2}{(d_u+d_v)^2}.
\end{equation}
On the other hand,
$
d_u^2+d_v^2=(d_u+d_v)^2-2d_u d_v,
$
so
\begin{equation}\label{eqn13}
\frac{d_u^2+d_v^2}{(d_u+d_v)^2}
=1-\frac{2d_u d_v}{(d_u+d_v)^2}. 
\end{equation}
Since $d_u+d_v\le 2\Delta$ for every edge $uv$, we have $\frac{1}{(d_u+d_v)^2} \ge \frac{1}{4\Delta^2}$. Therefore,
\begin{equation}\label{eqn14}
\frac{2d_u d_v}{(d_u+d_v)^2}
\ge \frac{2d_u d_v}{4\Delta^2}
= \frac{d_u d_v}{2\Delta^2}. 
\end{equation}
From \eqref{eqn13} and \eqref{eqn14},
\[
\frac{d_u^2+d_v^2}{(d_u+d_v)^2}
\le 1-\frac{d_u d_v}{2\Delta^2}.
\]
Summing over all edges gives
\begin{equation}\label{eqn15}
\sum_{uv\in E(G)} \frac{d_u^2+d_v^2}{(d_u+d_v)^2}
\le 
\sum_{uv\in E(G)} \left(1-\frac{d_u d_v}{2\Delta^2}\right)
=
m-\frac{M_2(G)}{2\Delta^2}.
\end{equation}
Combining \eqref{eqn12} and \eqref{eqn15} yields
\[
DSO(G)^2 \le m\left(m-\frac{M_2(G)}{2\Delta^2}\right).
\]
We now examine the equality condition.
Suppose equality holds.
Then both inequalities \eqref{eqn12} and \eqref{eqn14}
must be equalities.
In particular, equality in \eqref{eqn14} yields
$d_u+d_v=2\Delta$ for every edge.
Since $d_u,d_v\le\Delta$, we obtain
$d_u=d_v=\Delta$ for every edge,
and hence $G$ is regular.
Conversely, if $G$ is regular, then  $d_u=d_v=\Delta$  for every edge $uv$. Hence $M_2(G)=m\Delta^2$ and $DSO(G)=\frac{m}{\sqrt{2}}$. The right-hand side becomes
\[
\sqrt{m^2-\frac{m}{2\Delta^2} \cdot m\Delta^2}
=
\frac{m}{\sqrt{2}},
\]
so equality holds. This completes the proof.
\end{proof}
\begin{theorem}\label{DSO-R}
Let $G$ be a graph with at least one edge.
Then
\[
\frac{\delta}{\sqrt{2}}\,R(G) \;\le\; DSO(G) \;\le\; \frac{\Delta}{\sqrt{2}}\,R(G).
\]
Equality holds in both bounds if and only if $G$ is regular.
\end{theorem}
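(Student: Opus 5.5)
Our plan is to prove both bounds edge by edge, as in Theorem~\ref{Delta-delta}, by comparing the $DSO$ summand with the Randi\'c summand $1/\sqrt{d_ud_v}$ on each edge $uv$. Multiplying and dividing by $\sqrt{d_ud_v}$, we rewrite the $DSO$ contribution of $uv$ as
\[
\frac{\sqrt{d_u^2+d_v^2}}{d_u+d_v}
=\frac{1}{\sqrt{d_ud_v}}\cdot \frac{\sqrt{d_ud_v}\,\sqrt{d_u^2+d_v^2}}{d_u+d_v}.
\]
It therefore suffices to show that the factor $g(d_u,d_v)=\frac{\sqrt{d_ud_v(d_u^2+d_v^2)}}{d_u+d_v}$ lies between $\delta/\sqrt2$ and $\Delta/\sqrt2$. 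Summing the resulting inequalities over all edges then gives the two bounds.

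\textbf{Upper bound.} We square and aim to show $2d_ud_v(d_u^2+d_v^2)\le \Delta^2(d_u+d_v)^2$. Since $\Delta^2\ge d_ud_v$, it is enough to prove $2(d_u^2+d_v^2)\le (d_u+d_v)^2$. This is false in general: it is equivalent to $(d_u-d_v)^2\le 0$. So the naive comparison fails, and this is the main obstacle. We instead use the sharper estimate $\Delta\ge \max(d_u,d_v)$. Assume $d_u\ge d_v$ and write $t=d_v/d_u\in(0,1]$. The target then becomes
\[
2t(1+t^2)\le (1+t)^2 .
\]
Expanding, this is $0\le 1-t+t^2-2t^3$, which can be written as $(1-t)(1+t+... )$; we factor it as $(1-t)(2t^2+t+1)\ge 0$ and check directly that the expansion agrees. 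It holds, with equality only at $t=1$. Hence $g\le d_u/\sqrt2\le \Delta/\sqrt2$.

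\textbf{Lower bound.} By the same normalization, now with respect to the smaller degree, we need $2d_ud_v(d_u^2+d_v^2)\ge \delta^2(d_u+d_v)^2$. It suffices to prove this with $\delta$ replaced by $d_v=\min(d_u,d_v)$. Writing $s=d_u/d_v\ge1$, the target is $2s(1+s^2)\ge(1+s)^2$. This follows from $2s\ge 1+s$ together with $1+s^2\ge \frac{(1+s)^2}{2}$, and both factors are at least $1$ in the appropriate sense, so the product inequality holds. Equality forces $s=1$.

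\textbf{Equality.} In the upper bound, equality on an edge requires both $d_u=d_v$ and $\max(d_u,d_v)=\Delta$, so every edge joins two vertices of degree $\Delta$. Since $G$ is connected, $G$ is then regular. The lower bound is symmetric, with every edge joining two $\delta$-vertices. Conversely, if $G$ is regular, both sides equal $m/\sqrt2$.
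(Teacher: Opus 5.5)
Your argument is correct and essentially matches the paper's: you compare the summand $\frac{\sqrt{d_u^2+d_v^2}}{d_u+d_v}$ with $\frac{1}{\sqrt{d_ud_v}}$ edge by edge after squaring, your lower bound uses the paper's two ingredients ($d_ud_v\ge\delta^2$, here in the form $d_v\ge\delta$ and $s\ge 1$, together with $2(x^2+y^2)\ge(x+y)^2$), and your upper bound differs only cosmetically (you normalize by $\max(d_u,d_v)$ and factor a cubic, where the paper normalizes by $\Delta$ and uses $a^2+b^2\le a+b$ and $2ab\le a+b$). Two small repairs are needed: $(1+t)^2-2t(1+t^2)$ expands to $1+t^2-2t^3$, not $1-t+t^2-2t^3$ (the latter is $-1$ at $t=1$), and it is $1+t^2-2t^3$ that equals $(1-t)(2t^2+t+1)$; in the lower bound, multiplying your two inequalities only gives $(1+s)^3/2$ and leaves $1+s\ge 2$ implicit, so replace the vague ``at least $1$ in the appropriate sense'' with $2s(1+s^2)\ge 2(1+s^2)\ge(1+s)^2$, valid since $s\ge 1$.
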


\begin{proof}
\textbf{Lower bound.}
We first prove an edgewise inequality. For any edge $uv\in E(G)$,
\begin{equation}\label{eq03}
\frac{\sqrt{d_u^2+d_v^2}}{d_u+d_v} \ge \frac{\delta}{\sqrt{2}}\cdot\frac{1}{\sqrt{d_u d_v}}. 
\end{equation}
Since both sides are positive, squaring preserves the order.  Inequality \eqref{eq03} is equivalent to
\[
 2d_u d_v(d_u^2+d_v^2) \ge \delta^2(d_u+d_v)^2.
\]
Because $d_u,d_v\ge \delta$, we have $d_u d_v\ge \delta^2$, and therefore
\[
2d_u d_v(d_u^2+d_v^2) \ge 2\delta^2(d_u^2+d_v^2).
\]
Now,
\[
2\delta^2(d_u^2+d_v^2) - \delta^2(d_u+d_v)^2
= \delta^2\bigl[2(d_u^2+d_v^2) - (d_u+d_v)^2\bigr]
= \delta^2(d_u-d_v)^2 \ge 0.
\]
Thus
\[
2d_u d_v(d_u^2+d_v^2) \ge 2\delta^2(d_u^2+d_v^2) \ge \delta^2(d_u+d_v)^2,
\]
which establishes \eqref{eq03}.

Equality in the chain of inequalities requires
$d_u d_v = \delta^2$ and $(d_u-d_v)^2=0$, which together force $d_u = d_v = \delta$.
Now sum \eqref{eq03} over all edges:
\[
DSO(G) = \sum_{uv\in E(G)}\frac{\sqrt{d_u^2+d_v^2}}{d_u+d_v}
\ge \frac{\delta}{\sqrt{2}}\sum_{uv\in E(G)}\frac{1}{\sqrt{d_u d_v}}
= \frac{\delta}{\sqrt{2}}\,R(G).
\]

If equality holds in the theorem, then equality must hold in every instance of \eqref{eq03}, so every edge joins two vertices of degree $\delta$.  Because $G$ is connected, this implies that all vertices have degree $\delta$; hence $G$ is $\delta$-regular.
Conversely, if $G$ is regular of degree $r$, then $\delta = r$ and $R(G) = \frac{m}{r}$, while $DSO(G) = \frac{m}{\sqrt{2}}$.  The right‑hand side becomes $\frac{r}{\sqrt{2}}\cdot\frac{m}{r} = \frac{m}{\sqrt{2}}$, so equality is attained.  This completes the proof.

\noindent
\textbf{Upper bound.}
For an arbitrary edge $uv$, set $a = d_u/\Delta$ and $b = d_v/\Delta$.  Since $1 \le d_u,d_v \le \Delta$, we have $0 < a,b \le 1$.  The desired edgewise inequality
\[
\frac{\sqrt{d_u^2+d_v^2}}{d_u+d_v} \le \frac{\Delta}{\sqrt{2}}\,\frac{1}{\sqrt{d_u d_v}},
\]
is, after substituting $d_u = a\Delta$, $d_v = b\Delta$ and simplifying, equivalent to
\begin{equation}\label{eq04}
a b (a^2+b^2) \le \frac12 (a+b)^2. 
\end{equation}
Because $a,b \in (0,1]$, we have $a^2 \le a$ and $b^2 \le b$; hence $a^2+b^2 \le a+b$.
Multiplying this by the non‑negative number $ab$ yields
\[
ab(a^2+b^2) \le ab(a+b).
\]
Now, by the AM–GM inequality, $2ab \le a+b$, so $ab(a+b) \le \frac12 (a+b)^2$.
Chaining these inequalities gives exactly \eqref{eq04}.

Equality in \eqref{eq04} requires equality in both intermediate steps.
Equality in $a^2+b^2 \le a+b$ forces $a^2=a$ and $b^2=b$, which with $a,b>0$ implies $a=b=1$.
Equality in $2ab \le a+b$ also requires $a=b$.  Thus equality holds precisely when $a=b=1$, i.e., $d_u = d_v = \Delta$.

Summing the edgewise inequality over all edges, we obtain
\[
DSO(G) = \sum_{uv\in E(G)} \frac{\sqrt{d_u^2+d_v^2}}{d_u+d_v}
      \le \frac{\Delta}{\sqrt{2}} \sum_{uv\in E(G)} \frac{1}{\sqrt{d_u d_v}}
      = \frac{\Delta}{\sqrt{2}}\,R(G).
\]

If equality holds in the upper bound, then every edge must satisfy $d_u = d_v = \Delta$.  Since $G$ is connected and every vertex is incident with at least one edge, all vertices have degree $\Delta$; hence $G$ is $\Delta$-regular.
Conversely, if $G$ is regular of degree $r$, then $\Delta = r$ and $R(G) = m/r$, while $DSO(G) = m/\sqrt{2}$.  The right‑hand side becomes $\frac{r}{\sqrt{2}}\cdot\frac{m}{r} = \frac{m}{\sqrt{2}}$, so equality is achieved.
\end{proof}

\begin{theorem}\label{DSO-M1}
Let $G$ be a graph with $m>0$ edges. Then
\[
\frac{\delta\sqrt{2}\,m^2}{M_1(G)} \le DSO(G) \le m\sqrt{\frac{M_1(G)-m}{M_1(G)}}.
\]
Equality in the lower bound holds if and only if $G$ is regular.
Equality in the upper bound holds if and only if $G\cong K_2$.
\end{theorem}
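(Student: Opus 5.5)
For the lower bound I will use an edgewise estimate followed by the Cauchy--Schwarz (harmonic--arithmetic mean) inequality. For every edge $uv$ we have $d_u,d_v\ge\delta$, so $\sqrt{d_u^2+d_v^2}\ge \sqrt{2}\,\delta$. Hence
\[
\frac{\sqrt{d_u^2+d_v^2}}{d_u+d_v}\ge \frac{\sqrt2\,\delta}{d_u+d_v}.
\]
Summing over the edges gives $DSO(G)\ge \sqrt2\,\delta\sum_{uv}\frac{1}{d_u+d_v}$. Cauchy--Schwarz gives
\[
\Bigl(\sum_{uv}\frac{1}{d_u+d_v}\Bigr)\Bigl(\sum_{uv}(d_u+d_v)\Bigr)\ge m^2,
\]
and since $\sum_{uv}(d_u+d_v)=M_1(G)$, we get $\sum_{uv}\frac1{d_u+d_v}\ge m^2/M_1(G)$. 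Combining the two estimates yields the lower bound.

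For the equality case of the lower bound: the first step is tight only if $d_u=d_v=\delta$ on every edge, which forces $G$ to be regular by connectivity. Conversely, if $G$ is $r$-regular, then $M_1(G)=2mr$, and both sides equal $m/\sqrt2$.

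For the upper bound I will reuse the opening of the proof of Theorem~\ref{DSO-M2-Delta}:
\[
DSO(G)^2\le m\sum_{uv}\Bigl(1-\frac{2d_ud_v}{(d_u+d_v)^2}\Bigr).
\]
The target $m^2\frac{M_1-m}{M_1}=m^2-\frac{m^3}{M_1}$ therefore reduces to showing
\[
\sum_{uv}\frac{2d_ud_v}{(d_u+d_v)^2}\ge \frac{m^2}{M_1(G)}.
\]
The key observation is that for integers $d_u,d_v\ge1$,
\[
2d_ud_v-(d_u+d_v)=d_u(d_v-1)+d_v(d_u-1)\ge 0,
\]
so each term satisfies $\frac{2d_ud_v}{(d_u+d_v)^2}\ge \frac{1}{d_u+d_v}$. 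The same harmonic--arithmetic mean step as in the lower bound then gives $\sum_{uv}\frac1{d_u+d_v}\ge m^2/M_1(G)$, which completes the upper bound.

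The only delicate point is the equality case of the upper bound. Equality requires $d_u(d_v-1)+d_v(d_u-1)=0$ on every edge, that is, $d_u=d_v=1$ on every edge. For a connected graph this forces $G\cong K_2$. Conversely, for $K_2$ we have $DSO=1/\sqrt2$, $m=1$ and $M_1=2$, so the right-hand side is $\sqrt{1/2}$ and equality holds.
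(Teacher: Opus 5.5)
Your proof is correct and matches the paper's: the upper bound follows the same route (Cauchy--Schwarz, the edgewise inequality $2d_ud_v\ge d_u+d_v$, then $\sum_{uv}\frac{1}{d_u+d_v}\ge m^2/M_1(G)$), with the same equality analysis forcing $d_u=d_v=1$. For the lower bound you reach the same intermediate quantity $\frac{\delta}{\sqrt2}H(G)$ in one step via $\sqrt{d_u^2+d_v^2}\ge\sqrt2\,\delta$. The paper instead goes through $DSO(G)\ge\frac{\delta}{\sqrt2}R(G)$ (Theorem~\ref{DSO-R}) and $R(G)\ge H(G)$, so your shortcut is slightly simpler and gives the same equality characterization.
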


\begin{proof}
\textbf{Lower bound.}
From Theorem~\ref{DSO-R}, we have
\[
DSO(G) \ge \frac{\delta}{\sqrt{2}} R(G),
\]
with equality if and only if every edge joins two vertices of degree $\delta$ (i.e., $G$ is $\delta$-regular).
By the AM--GM inequality, for every edge $uv$,
\[
\frac{1}{\sqrt{d_u d_v}} \ge \frac{2}{d_u+d_v},
\]
with equality if and only if $d_u=d_v$.
Summing over all edges yields
\[
R(G) \ge H(G),
\]
where $H(G)=\sum_{uv} \frac{2}{d_u+d_v}$, and equality holds if and only if $d_u=d_v$ for all edges.
Finally, by the Cauchy--Schwarz inequality,
\[
m^2 = \Bigl(\sum_{uv\in E(G)} 1\Bigr)^2
\le \Bigl(\sum_{uv\in E(G)} (d_u+d_v)\Bigr)
\Bigl(\sum_{uv\in E(G)} \frac{1}{d_u+d_v}\Bigr)
= M_1(G)\cdot \frac{H(G)}{2},
\]
so $H(G) \ge \frac{2m^2}{M_1(G)}$, with equality if and only if $d_u+d_v$ is constant over all edges.

Chaining these inequalities gives
\[
DSO(G) \ge \frac{\delta}{\sqrt{2}} R(G)
\ge \frac{\delta}{\sqrt{2}} H(G)
\ge \frac{\delta}{\sqrt{2}}\cdot \frac{2m^2}{M_1(G)}
= \frac{\delta\sqrt{2}\,m^2}{M_1(G)}.
\]
Equality in the final lower bound requires equality in each step, which happens precisely when $G$ is regular (all degrees equal to $\delta$). Conversely, if $G$ is regular of degree $r$, then $\delta=r$, $M_1(G)=2mr$, and the lower bound becomes $\frac{r\sqrt{2}\,m^2}{2mr} = \frac{m}{\sqrt{2}}$, which equals $DSO(G)$.

\noindent
\textbf{Upper bound.}
For each edge $uv\in E(G)$, set
$
x_{uv}=\frac{\sqrt{d_u^2+d_v^2}}{d_u+d_v}.
$
By the Cauchy–Schwarz inequality,
\[
DSO(G)^2 = \left(\sum_{uv\in E(G)} x_{uv}\right)^2
\le m \sum_{uv\in E(G)} x_{uv}^2.
\]

We claim that for every edge $uv$,
\[
x_{uv}^2 \le 1-\frac{1}{d_u+d_v}.
\]
Indeed, this is equivalent to
\[
\frac{d_u^2+d_v^2}{(d_u+d_v)^2} \le \frac{d_u+d_v-1}{d_u+d_v},
\]
which simplifies to
\[
d_u^2+d_v^2 \le (d_u+d_v)(d_u+d_v-1),\]
or equivalently, $0 \le 2d_ud_v-d_u-d_v.$
The last inequality holds because $d_u,d_v\ge 1$, so  the claim is proved.
Summing over all edges yields
\[
\sum_{uv\in E(G)} x_{uv}^2
\le m - \sum_{uv\in E(G)}\frac{1}{d_u+d_v}
= m - \frac{H(G)}{2}.
\]

By the Cauchy–Schwarz inequality,
\[
m^2 = \left(\sum_{uv\in E(G)} 1\right)^2
\le \left(\sum_{uv\in E(G)}(d_u+d_v)\right)
\left(\sum_{uv\in E(G)}\frac{1}{d_u+d_v}\right)
\]
\[
= M_1(G)\cdot \frac{H(G)}{2},
\]
so $\frac{H(G)}{2} \ge \frac{m^2}{M_1(G)}$.
Therefore,
\[
\sum_{uv\in E(G)} x_{uv}^2 \le m - \frac{m^2}{M_1(G)}
= \frac{m(M_1(G)-m)}{M_1(G)}.
\]
Hence,
\[
DSO(G)^2
\le m \cdot \frac{m(M_1(G)-m)}{M_1(G)}
= \frac{m^2(M_1(G)-m)}{M_1(G)},
\]
which gives the desired upper bound:
\[
DSO(G) \le m\sqrt{\frac{M_1(G)-m}{M_1(G)}}.
\]
Equality in the final inequality requires equality in all three preceding inequalities.
Equality in
$
x_{uv}^2 \le 1-\frac1{d_u+d_v}
$
is equivalent to
$
2d_ud_v-d_u-d_v=0,
$
whose only positive integer solution is
$
d_u=d_v=1.
$
Hence every edge joins two vertices of degree $1$, so $G\cong K_2$. In this case, $x_{uv}=1/\sqrt2$ and $d_u+d_v=2$ are constant on all edges, so equality also holds in both applications of the Cauchy--Schwarz inequality.
Conversely, for $G\cong K_2$, then $
M_1(G)=2$, $m=1$ and $DSO(G)=\frac1{\sqrt2}$,
and the upper bound is attained.
\end{proof}

\begin{lemma}\label{edge}
Let $a,b,c$ be real numbers with $a,b \ge c > 0$. Then
\[
a^2+b^2 \le \frac{(a+b)^3}{4c},
\]
with equality if and only if $a=b=c$.
\end{lemma}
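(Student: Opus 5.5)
The plan is to split the inequality into two elementary steps, one using only $c$ and one using only $a+b$. Since $c>0$, the claim is equivalent to
\[
4c\,(a^2+b^2) \le (a+b)^3 .
\]

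\textbf{Step 1.} Use the hypothesis $a,b\ge c$, which gives $2c \le a+b$, so $4c \le 2(a+b)$. Multiplying by the positive quantity $a^2+b^2$ yields
\[
4c\,(a^2+b^2) \le 2(a+b)(a^2+b^2).
\]
Equality here holds if and only if $a+b=2c$. Given $a,b\ge c$, this happens exactly when $a=b=c$.

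\textbf{Step 2.} Compare $2(a+b)(a^2+b^2)$ with $(a+b)^3$. Since $a+b>0$, this reduces to comparing $2(a^2+b^2)$ with $(a+b)^2$. But $2(a^2+b^2)-(a+b)^2=(a-b)^2\ge 0$, which goes the \emph{wrong} direction. This is the main obstacle: the naive chain fails, and I must use the hypothesis more carefully.

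The fix is to test the statement itself. Take $a$ large and $b=c$. Then the left side is about $a^2$ while the right side is about $a^3/(4c)$, so that case is fine. Now take $a=b=t\ge c$. The inequality reads $2t^2\le 8t^3/(4c)=2t^3/c$, i.e. $c\le t$, which is true. So the correct route is to write $a^2+b^2\le (a+b)^2 - 2ab$ and bound more cleverly. The cleanest approach is
\[
a^2+b^2 \le (a+b)^2 - 2c^2 \quad\text{(since } ab\ge c^2\text{)},
\]
which reduces the claim to $4c(s^2-2c^2)\le s^3$ with $s=a+b\ge 2c$. Setting $s=2c\,t$ with $t\ge1$, this becomes $16c^3t^2-8c^3\le 8c^3t^3$, i.e. $2t^2-1\le t^3$, i.e. $t^3-2t^2+1\ge0$. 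This factors as $(t-1)(t^2-t-1)$, which is \emph{negative} for $1<t<\tfrac{1+\sqrt5}{2}$. So this route is too lossy, and I would instead handle $a^2+b^2$ directly.

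The direct approach is to write $a^2+b^2=s^2-2p$ with $p=ab$. For fixed $s$, the constraint $a,b\ge c$ gives $p\ge c(s-c)$, since the minimum of $ab$ with $a+b=s$ and $\min\ge c$ is attained at $\{c,s-c\}$. Hence $a^2+b^2\le c^2+(s-c)^2$, and it suffices to show
\[
4c\bigl(c^2+(s-c)^2\bigr)\le s^3 \quad\text{for } s\ge 2c.
\]
Substituting $s=c(2+u)$ with $u\ge0$ gives $4(1+(1+u)^2)\le(2+u)^3$. Expanding, this is $8+8u+4u^2\le 8+12u+6u^2+u^3$, which holds with equality if and only if $u=0$. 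The equality analysis then chains back: $u=0$ means $s=2c$, which forces $a=b=c$. The key step to verify carefully is the extremal claim on $ab$, which follows from $(a-c)(b-c)\ge0$.
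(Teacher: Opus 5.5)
Your final argument is correct and is essentially the paper's proof: both write $a^2+b^2=s^2-2ab$ with $s=a+b$, use $ab\ge c(s-c)$, and reduce to a cubic that is nonnegative for $s\ge 2c$ with equality only at $s=2c$. Your $u(u^2+2u+4)$ with $s=c(2+u)$ is exactly the paper's factorization $(s-2c)\bigl((s-c)^2+3c^2\bigr)$ divided by $c^3$. The remaining differences are cosmetic: you justify $ab\ge c(s-c)$ via $(a-c)(b-c)\ge 0$ instead of concavity, and you skip the paper's separate case $s\ge 4c$, which is not needed.
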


\begin{proof}
Set $s=a+b$ and $p=ab$, so that $a^2+b^2=s^2-2p$. The claimed inequality is equivalent to
\begin{equation}\label{eqn101}
8c p \ge s^2(4c - s). 
\end{equation}
If $s\ge 4c$, the right-hand side of \eqref{eqn101} is non-positive while $p\ge c^2>0$, so \eqref{eqn101} holds trivially.

Now suppose $2c\le s<4c$ (recall $s\ge 2c$, since $a,b\ge c$). Viewing $p(a)=a(s-a)$ as a function of $a\in[c,s-c]$, this is a concave (downward) parabola, so it attains its minimum at the endpoints, giving
$
p\ge c(s-c).
$
It therefore suffices to verify \eqref{eqn101} for $p=c(s-c)$, i.e.
\[
8c^2(s-c)\ge s^2(4c-s) = 4c s^2 - s^3,
\]
which rearranges to
$
s^3 - 4c s^2 + 8c^2 s - 8c^3 \ge 0.
$
This polynomial factors as
\[
s^3-4c s^2+8c^2 s-8c^3 = (s-2c)\bigl((s-c)^2+3c^2\bigr),
\]
as may be checked directly by expansion. Since $s\ge 2c$, the factor $(s-2c)\ge0$, and $(s-c)^2+3c^2>0$ always; hence the product is non-negative, proving \eqref{eqn101} in this case as well.

For equality, both factors of the last expression must vanish or the product equal $0$ exactly at $s=2c$ (since the second factor is always strictly positive), and equality in $p\ge c(s-c)$ requires $a=c$ or $b=c$ (boundary of the concave function). Combining $s=2c$ with $a,b\ge c$ forces $a=b=c$. Conversely, if $a=b=c$, direct substitution shows both sides of the original inequality equal $8c^3$, so equality holds.
\end{proof}

\begin{theorem}\label{M1new}
Let $G$ be a graph with $m>0$ edges and minimum degree $\delta \ge 1$. Then
\[DSO(G) \le \sqrt{\frac{m\,M_1(G)}{4\delta}}.
\]
Equality holds if and only if $G$ is a regular graph.
\end{theorem}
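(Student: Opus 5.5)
The plan is to combine the Cauchy--Schwarz step used in the proofs of Theorems~\ref{DSO-M2-Delta} and \ref{DSO-M1} with the edgewise estimate of Lemma~\ref{edge}. First, exactly as before, Cauchy--Schwarz gives
\[
DSO(G)^2 \le m\sum_{uv\in E(G)} \frac{d_u^2+d_v^2}{(d_u+d_v)^2},
\]
with equality if and only if the ratio $\sqrt{d_u^2+d_v^2}/(d_u+d_v)$ is constant over all edges.

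Next, for each edge $uv$ we apply Lemma~\ref{edge} with $a=d_u$, $b=d_v$ and $c=\delta$. This is allowed because $d_u,d_v\ge\delta>0$, and it gives
\[
\frac{d_u^2+d_v^2}{(d_u+d_v)^2}\le \frac{d_u+d_v}{4\delta},
\]
with equality if and only if $d_u=d_v=\delta$. Summing over the edges and using $\sum_{uv\in E(G)}(d_u+d_v)=M_1(G)$ bounds the sum by $M_1(G)/(4\delta)$. Substituting into the first inequality gives $DSO(G)^2\le mM_1(G)/(4\delta)$, and taking square roots yields the stated bound.

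For the equality case, suppose equality holds in the theorem. Then equality must hold in Lemma~\ref{edge} for every edge, so every edge joins two vertices of degree $\delta$. Since $G$ is connected, this makes $G$ $\delta$-regular. Conversely, if $G$ is $r$-regular, then $M_1(G)=2mr$ and $\delta=r$, so the right-hand side equals $\sqrt{m\cdot 2mr/(4r)}=m/\sqrt2$. This is exactly $DSO(G)$, so equality holds.

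There is no real obstacle. The one substantive ingredient is the edgewise inequality, which is supplied entirely by Lemma~\ref{edge}. The remaining point needing care is the equality analysis: we only need the necessary direction from the lemma's equality condition, and the converse is verified by direct computation rather than through the Cauchy--Schwarz equality condition.
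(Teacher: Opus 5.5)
Your proposal is correct and follows the paper's proof essentially verbatim: Cauchy--Schwarz, then Lemma~\ref{edge} applied edgewise with $a=d_u$, $b=d_v$, $c=\delta$, summing to $M_1(G)/(4\delta)$, with the lemma's equality condition forcing $d_u=d_v=\delta$ on every edge. The differences are cosmetic and both versions are fine. You verify the converse by direct computation ($M_1(G)=2mr$ gives $m/\sqrt2$) rather than by checking equality in both inequalities. You also deduce regularity from connectedness, whereas the paper uses that $\delta\ge 1$ puts every vertex on an edge.
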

\begin{proof}
For each edge $uv\in E(G)$, set
$
x_{uv} = \frac{\sqrt{d_u^2+d_v^2}}{d_u+d_v}.
$
By Cauchy–Schwarz,
\[
DSO(G)^2 = \left(\sum_{uv\in E(G)} x_{uv}\right)^2
\le m \sum_{uv} x_{uv}^2.
\]

Applying Lemma~\ref{edge} with $a=d_u$, $b=d_v$, and $c=\delta$, we obtain
\[
x_{uv}^2 = \frac{d_u^2+d_v^2}{(d_u+d_v)^2}
\le \frac{d_u+d_v}{4\delta},
\]
with equality if and only if $d_u=d_v=\delta$. Summing over all edges gives
\[
\sum_{uv\in E(G)} x_{uv}^2 \le \frac{1}{4\delta} \sum_{uv\in E(G)} (d_u+d_v)
= \frac{M_1(G)}{4\delta}.
\]
Hence
\[
DSO(G)^2 \le \frac{m\,M_1(G)}{4\delta}.
\]

For equality, both the Cauchy--Schwarz inequality and Lemma~\ref{edge}
must be equalities.
Equality in Lemma~\ref{edge} implies
$
d_u=d_v=\delta
$
for every edge $uv\in E(G)$.
Since $\delta\ge1$, every vertex is incident with an edge, and therefore every vertex has degree $\delta$.
Hence $G$ is regular.

Conversely, if $G$ is regular, then $d_u=d_v=\delta$ for every edge, so equality holds in Lemma~\ref{edge}. Moreover,
$
x_{uv}=\frac1{\sqrt2}
$
for every edge, and therefore equality also holds in the Cauchy--Schwarz inequality. Hence equality is attained.
\end{proof}

\begin{theorem}\label{t3.6}
Let $G$ be a nontrivial graph. Then
\[
\frac{1}{\sqrt{2}}\,GA(G) \le DSO(G) \le \frac{1}{2}\sqrt{\frac{\Delta}{\delta}+\frac{\delta}{\Delta}}\; GA(G).
\]
Equality holds in the lower bound if and only if $G$ is a regular graph.  
Equality holds in the upper bound if and only if either the graph is regular or $G$ is bipartite semiregular with part degrees $\Delta$ and $\delta$.
\end{theorem}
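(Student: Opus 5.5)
The plan is to compare the two indices edge by edge through the ratio
\[
\frac{\sqrt{d_u^2+d_v^2}/(d_u+d_v)}{2\sqrt{d_ud_v}/(d_u+d_v)}=\frac12\sqrt{\frac{d_u^2+d_v^2}{d_ud_v}}=\frac12\sqrt{x+\frac1x},\qquad x=\frac{d_u}{d_v}.
\]
Let $g(x)=\frac12\sqrt{x+1/x}$. Since $g(x)=g(1/x)$, we may assume $d_u\ge d_v$, so that $x\in[1,\Delta/\delta]$. On this interval $x+1/x$ is increasing, being strictly increasing for $x>1$. Hence
\[
\frac{1}{\sqrt2}=g(1)\le g(x)\le g(\Delta/\delta)=\frac12\sqrt{\frac{\Delta}{\delta}+\frac{\delta}{\Delta}}.
\]
Writing each summand of $DSO$ as $g(d_u/d_v)$ times the corresponding summand of $GA$, we get for every edge
\[
\frac{1}{\sqrt2}\cdot\frac{2\sqrt{d_ud_v}}{d_u+d_v}\le \frac{\sqrt{d_u^2+d_v^2}}{d_u+d_v}\le \frac12\sqrt{\frac{\Delta}{\delta}+\frac{\delta}{\Delta}}\cdot\frac{2\sqrt{d_ud_v}}{d_u+d_v}.
\]
Summing over all edges then gives both bounds. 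The lower bound can also be seen directly: $2(d_u^2+d_v^2)\ge 4d_ud_v$, i.e.\ $(d_u-d_v)^2\ge0$.

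For the equality cases we use that the $GA$ summands are positive, so equality in a summed bound forces equality on every edge.

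\emph{Lower bound.} Equality on an edge means $x=1$, i.e.\ $d_u=d_v$. Since $G$ is connected, equal degrees across every edge propagate along paths, so all vertices have the same degree and $G$ is regular. Conversely, for a regular graph every edge has $x=1$, and equality holds.

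\emph{Upper bound.} By strict monotonicity of $x+1/x$ on $[1,\infty)$, equality on an edge means $d_u/d_v=\Delta/\delta$. Given $\delta\le d_v\le d_u\le\Delta$, this forces $\{d_u,d_v\}=\{\Delta,\delta\}$. If $\Delta=\delta$, the graph is regular. If $\Delta>\delta$, every edge joins a $\Delta$-vertex to a $\delta$-vertex. Since $G$ is connected with no isolated vertices, every vertex has degree $\Delta$ or $\delta$, and the partition into these two classes is a bipartition. So $G$ is bipartite semiregular with part degrees $\Delta$ and $\delta$. Conversely, both families satisfy $\{d_u,d_v\}=\{\Delta,\delta\}$ on every edge, so equality holds.

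I expect no serious obstacle here. The only care needed is the equality characterization, in particular checking that "every edge has endpoint degrees $\Delta$ and $\delta$" is exactly bipartite semiregularity, mirroring the argument in Theorem~\ref{Delta-delta}. Nontriviality of $G$ ensures $\delta\ge1$, so the ratios are well defined.
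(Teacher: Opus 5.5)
Your proposal is correct and follows essentially the same route as the paper: you factor each $DSO$ summand as the corresponding $GA$ summand times $\tfrac12\sqrt{d_u/d_v+d_v/d_u}$, bound this factor using $t+1/t\ge 2$ and the monotonicity of $t+1/t$ on $[1,\Delta/\delta]$, and then get the equality cases edge by edge. Your equality analysis for the upper bound is actually a bit more explicit than the paper's. You note that $d_u/d_v=\Delta/\delta$ together with $\delta\le d_v\le d_u\le\Delta$ forces $d_u=\Delta$ and $d_v=\delta$, which is what makes the bipartition into $\Delta$- and $\delta$-vertices valid.
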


\begin{proof}
Using the identity
\[
\frac{\sqrt{d_u^2+d_v^2}}{d_u+d_v} = \sqrt{\frac{d_u}{d_v}+\frac{d_v}{d_u}}\;\frac{\sqrt{d_u d_v}}{d_u+d_v},
\]
we bound the square root factor. The inequality $t+1/t \ge 2$ for $t>0$ gives
\[
\sqrt{\frac{d_u}{d_v}+\frac{d_v}{d_u}} \ge \sqrt{2},
\]
with equality if and only if $d_u/d_v = 1$, i.e., $d_u=d_v$. Hence,
\[
DSO(G) \ge \sqrt{2} \sum_{uv\in E(G)} \frac{\sqrt{d_u d_v}}{d_u+d_v}
= \frac{\sqrt{2}}{2} GA(G),
\]
proving the lower bound. Equality holds precisely when $d_u=d_v$ for every edge, i.e., $G$ is regular.

For the upper bound, assume without loss of generality $d_u \ge d_v$ and set $t = d_u/d_v$. Then $1\le t \le \Delta/\delta$. The function $f(t)=t+1/t$ is increasing for $t\ge 1$, so
\[
\sqrt{\frac{d_u}{d_v}+\frac{d_v}{d_u}} \le \sqrt{\frac{\Delta}{\delta}+\frac{\delta}{\Delta}},
\]
with equality for an edge if and only if either $\Delta=\delta$ or $t = \Delta/\delta$ (i.e., $\{d_u,d_v\} = \{\Delta,\delta\}$).  
Thus,
\[
DSO(G) \le \sqrt{\frac{\Delta}{\delta}+\frac{\delta}{\Delta}} \sum_{uv\in E(G)} \frac{\sqrt{d_u d_v}}{d_u+d_v}
= \frac{1}{2}\sqrt{\frac{\Delta}{\delta}+\frac{\delta}{\Delta}}\; GA(G).
\]
For the sum to achieve equality, every edge must attain equality in the pointwise factor. Hence if $\Delta>\delta$, every edge must join a vertex of degree $\Delta$ with a vertex of degree $\delta$. 
In this case, $G$ is a bipartite semiregular graph with part degrees $\Delta$ and $\delta$.
If $\Delta=\delta$, the factor equals $\sqrt{2}$ for every edge automatically, and equality holds (the graph is regular). This completes the proof.
\end{proof}
\begin{remark}
Bounds relating $DSO(G)$ to the geometric--arithmetic index $GA(G)$ also appear in \cite{5}, where it is shown that
\[
\frac{\sqrt{2}}{4\Delta}Alb(G)+\frac12 GA(G) \le DSO(G) \le \frac{1}{2\delta}Alb(G)+\frac{\sqrt{2}}{2}GA(G).
\]
A direct comparison shows that neither bound dominates the other uniformly; the two are complementary, each being sharper on a different extremal family.

For a regular graph of degree $r$ with $m$ edges, $Alb(G)=0$ and $DSO(G)=m/\sqrt2$. The lower bound of Theorem~\ref{t3.6} attains this value exactly,
\[
 (1/\sqrt2)GA(G)=m/\sqrt2, 
 \]
 whereas the lower bound above reduces to $\tfrac12 GA(G)=m/2$, which is strictly weaker. Both upper bounds coincide and are exact in this case, since $Alb(G)=0$.

The situation is different for the lower bound on bipartite semiregular graphs.
Writing $DSO(G)=m\sqrt{\Delta^2+\delta^2}/(\Delta+\delta)$, the lower bound of Theorem~\ref{t3.6} gives $(1/\sqrt2)GA(G)=m\sqrt{2\Delta\delta}/(\Delta+\delta)$, which by $\Delta^2+\delta^2\ge 2\Delta\delta$ is in general farther from the true value than the lower bound above, which incorporates the additive $Alb(G)$ term and tracks the degree imbalance more closely (e.g.\ for $\Delta=4,\delta=1$, the bound above yields $\approx 0.665\,m$ versus $\approx 0.566\,m$ from Theorem~\ref{t3.6}, against the true value $\approx 0.825\,m$). 

On the other hand, the upper bound of Theorem~\ref{t3.6} is attained with equality on this family, $\tfrac12\sqrt{\Delta/\delta+\delta/\Delta}\,GA(G)=DSO(G)$, while the upper bound above, $\tfrac1{2\delta}Alb(G)+\tfrac{\sqrt2}{2}GA(G)$, is considerably looser (e.g.\ $\approx 2.07\,m$ for the same example).

In summary, Theorem~\ref{t3.6} provides a sharper, tight characterization for the upper bound on both regular and bipartite semiregular graphs, while the additive bound of \cite{5} is sharper for the lower bound on graphs with large degree imbalance. The two results are therefore best viewed as complementary rather than one subsuming the other.
\end{remark}

\begin{lemma}\label{const}
Let $G$ be a graph.
If $d_u^2+d_v^2$ is constant for all edges $uv\in E(G)$,
then $G$ is either a regular graph or a bipartite semiregular graph.
\end{lemma}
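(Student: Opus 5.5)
Throughout, $G$ is connected, as the paper assumes. Let $C$ be the common value of $d_u^2+d_v^2$ over all edges. The key observation is that if two edges share a vertex, their other endpoints have the same degree. Indeed, if $uv$ and $uw$ are edges, then $d_u^2+d_v^2=C=d_u^2+d_w^2$. This gives $d_v^2=d_w^2$, and since degrees are positive, $d_v=d_w$. So for each vertex $u$, all neighbours of $u$ share a common degree, and that degree is determined by $d_u$ alone. Define $g(x)=\sqrt{C-x^2}$. Then every neighbour of a vertex of degree $x$ has degree $g(x)$, and $g(g(x))=x$.

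Next I fix a vertex $w$ and set $a=d_w$, $b=g(a)$. Since $G$ is connected, I propagate along paths by induction on distance from $w$. Vertices at even distance have degree $a$, because they are neighbours of degree-$b$ vertices and $g(b)=a$. Vertices at odd distance have degree $b$. Hence every vertex has degree $a$ or $b$, and every edge joins a degree-$a$ vertex to a degree-$b$ vertex.

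There are now two cases.
\begin{itemize}
\item If $a=b$, all degrees equal $a$, so $G$ is regular.
\item If $a\neq b$, let $A$ be the set of vertices of degree $a$ and $B$ the set of vertices of degree $b$. These sets partition $V(G)$. No edge lies inside $A$, since its endpoints would need degrees $a$ and $g(a)=b\ne a$. Likewise no edge lies inside $B$. So $G$ is bipartite with parts $A$ and $B$, and each part is degree-constant. That is, $G$ is bipartite semiregular, with part degrees necessarily $\Delta$ and $\delta$.
\end{itemize}

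The main point needing care is the propagation step. One must make sure it uses connectedness and not just local information: in a disconnected graph, different components could realize different pairs $\{a,b\}$ with the same sum of squares. The rest is routine. The trivial case $m=0$ (a single vertex) is regular and can be noted separately.
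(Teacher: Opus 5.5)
Your proof is correct and follows essentially the same route as the paper's. Both arguments observe that every neighbour of a vertex of degree $r$ must have degree $\sqrt{C-r^2}$, use connectedness to conclude that only two degrees occur, and then split into the regular case and the bipartite semiregular case. Your explicit induction on distance from a fixed vertex just spells out the propagation step that the paper states in one line, and you are right that connectedness is genuinely needed: for example, $K_{1,7}\cup K_6$ has $d_u^2+d_v^2=50$ on every edge but is neither regular nor bipartite.
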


\begin{proof}
Suppose $d_u^2+d_v^2 = c$ for every edge $uv$.
Let $v$ be a vertex of degree $r$, and let $u$ be any neighbor of $v$.
Then $d_u^2+r^2 = c$, so $d_u = \sqrt{c-r^2}$.
Thus every neighbor of a vertex of degree $r$ has the same degree
$s = \sqrt{c-r^2}$.
Similarly, every neighbor of a vertex of degree $s$ has degree $r$.
Because $G$ is connected, only the two degrees $r$ and $s$ can occur in $G$.
If $r=s$, then $G$ is regular.
If $r\neq s$, then every edge joins a vertex of degree $r$ to a vertex of degree $s$;
hence the vertices of degree $r$ and those of degree $s$ form the two parts of a bipartition,
and $G$ is bipartite semiregular.
\end{proof}

\begin{theorem}\label{DSO-H-ISI-M1}
Let $G$ be a  graph. Then
\[
DSO(G)\le \sqrt{\frac{H(G)}{2} \left(M_1(G)-2ISI(G)\right)}.
\]
Equality holds if and only if $G$ is either a regular graph or a bipartite semiregular graph.
\end{theorem}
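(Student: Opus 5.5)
Write each summand as a product and apply Cauchy--Schwarz with the weights
\[
\frac{1}{\sqrt{d_u+d_v}}\quad\text{and}\quad \frac{\sqrt{d_u^2+d_v^2}}{\sqrt{d_u+d_v}}.
\]
This gives
\[
DSO(G)^2=\Bigl(\sum_{uv\in E(G)}\frac{1}{\sqrt{d_u+d_v}}\cdot\frac{\sqrt{d_u^2+d_v^2}}{\sqrt{d_u+d_v}}\Bigr)^2
\le \Bigl(\sum_{uv\in E(G)}\frac{1}{d_u+d_v}\Bigr)\Bigl(\sum_{uv\in E(G)}\frac{d_u^2+d_v^2}{d_u+d_v}\Bigr).
\]
The first factor is exactly $H(G)/2$. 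For the second factor, I use the identity $d_u^2+d_v^2=(d_u+d_v)^2-2d_ud_v$, which gives
\[
\frac{d_u^2+d_v^2}{d_u+d_v}=(d_u+d_v)-\frac{2d_ud_v}{d_u+d_v}.
\]
Summing over all edges, the second factor equals $M_1(G)-2ISI(G)$. Taking square roots yields the stated bound, and no further estimate is needed.

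For the equality case, equality in Cauchy--Schwarz holds exactly when the two weight vectors are proportional. That is, $\sqrt{d_u^2+d_v^2}/\sqrt{d_u+d_v}=\lambda/\sqrt{d_u+d_v}$ for all edges, which means $d_u^2+d_v^2$ is constant over $E(G)$. By Lemma~\ref{const}, $G$ is then regular or bipartite semiregular.

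Conversely, if $G$ is regular, or bipartite semiregular with part degrees $r$ and $s$, then every edge has the same degree pair $\{r,s\}$. Hence $d_u^2+d_v^2$ is constant on all edges, the vectors are proportional, and equality holds. Alternatively, one checks directly that both sides equal $m\sqrt{r^2+s^2}/(r+s)$.

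There is no serious obstacle. The only subtle point is choosing the right Cauchy--Schwarz splitting, with the $1/\sqrt{d_u+d_v}$ weight, so that $H$ and $M_1-2ISI$ appear naturally. The equality characterization then rests entirely on Lemma~\ref{const} together with the connectedness assumption made throughout the paper.
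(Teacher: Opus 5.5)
Your proof is correct and matches the paper's argument step for step: the same Cauchy--Schwarz splitting with weights $1/\sqrt{d_u+d_v}$ and $\sqrt{d_u^2+d_v^2}/\sqrt{d_u+d_v}$, the same identification of the two factors as $H(G)/2$ and $M_1(G)-2ISI(G)$, and the same equality analysis via proportionality, constancy of $d_u^2+d_v^2$, and Lemma~\ref{const}. You are right to invoke connectedness explicitly, since for a disconnected graph (e.g.\ $K_{1,7}$ together with a $5$-regular component) $d_u^2+d_v^2$ can be constant on all edges without the graph being regular or bipartite semiregular.
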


\begin{proof}
For every edge $uv\in E(G)$, define
$
a_{uv}=\frac{\sqrt{d_u^2+d_v^2}}{\sqrt{d_u+d_v}}$ and $b_{uv}=\frac{1}{\sqrt{d_u+d_v}}$.
Then $a_{uv} b_{uv}=\frac{\sqrt{d_u^2+d_v^2}}{d_u+d_v}$.
Applying the Cauchy–Schwarz inequality to the sequences $a_{uv}$ and $b_{uv}$ gives
\begin{equation}\label{eqn16}
DSO(G)^2
\le 
\left(\sum_{uv\in E(G)} \frac{d_u^2+d_v^2}{d_u+d_v}\right)
\left(\sum_{uv\in E(G)} \frac{1}{d_u+d_v}\right). 
\end{equation}
Since $d_u^2+d_v^2=(d_u+d_v)^2-2d_u d_v$, we have
\[
\frac{d_u^2+d_v^2}{d_u+d_v}
=
(d_u+d_v)-\frac{2d_u d_v}{d_u+d_v}.
\]
Thus
\begin{equation}\label{eqn17}
\sum_{uv\in E(G)} \frac{d_u^2+d_v^2}{d_u+d_v}
=
M_1(G)-2ISI(G).
\end{equation}
Moreover, from the definition of $H(G)$,
\begin{equation}\label{eqn18}
\sum_{uv\in E(G)} \frac{1}{d_u+d_v}
=
\frac{H(G)}{2}. 
\end{equation}
Substituting \eqref{eqn17} and \eqref{eqn18} into \eqref{eqn16} yields
\[
DSO(G)^2
\le 
\frac{H(G)}{2} \left(M_1(G)-2ISI(G)\right),
\]
hence
\[
DSO(G)
\le 
\sqrt{\frac{H(G)}{2} \left(M_1(G)-2ISI(G)\right)}.
\]
Equality holds if and only if the Cauchy–Schwarz inequality \eqref{eqn16} is an equality. Since $a_{uv}, b_{uv}\ge 0$, this occurs precisely when there exists a constant $k$ such that $a_{uv}=k\,b_{uv}$ for every $uv\in E(G)$.
Now
$
\frac{a_{uv}}{b_{uv}} = \sqrt{d_u^2+d_v^2},
$
so this is equivalent to
$d_u^2+d_v^2$
being constant over all edges.
 By Lemma~\ref{const}, $G$ is therefore regular or bipartite semiregular.

Conversely, suppose $G$ is regular or bipartite semiregular.
In a regular graph of degree $r$, we have
$M_1(G)=2mr$, $H(G)=m/r$, $ISI(G)=mr/2$, and $DSO(G)=m/\sqrt{2}$,
so the right‑hand side becomes
$ m/\sqrt{2}$, giving equality.
If $G$ is bipartite semiregular with part degrees $\Delta$ and $\delta$, then
$M_1(G)=m(\Delta+\delta)$, $H(G)=2m/(\Delta+\delta)$,
$ISI(G)=m\Delta\delta/(\Delta+\delta)$, and
$DSO(G)=m\sqrt{\Delta^2+\delta^2}/(\Delta+\delta)$.
A short calculation shows
$M_1(G)-2ISI(G)=m(\Delta^2+\delta^2)/(\Delta+\delta)$,
and the right‑hand side becomes
\[\sqrt{\frac{H(G)}{2}(M_1(G)-2ISI(G))}
= m\frac{\sqrt{\Delta^2+\delta^2}}{(\Delta+\delta)}=DSO(G).\]
Hence equality holds in both cases.
\end{proof}
\begin{remark}
A related lower bound involving the same combination $M_1(G)-2ISI(G)$ together with the harmonic index $H(G)$ was obtained independently by Movahedi \cite{5} as follows:
\[
DSO(G)\ge \frac{M_1(G)-2ISI(G)+\delta\Delta H(G)}{\sqrt{2}(\Delta+\delta)}.
\]

Moreover, the upper bound presented in Theorem~\ref{DSO-H-ISI-M1} is a direct sharpening of the upper bound from Theorem 3.4 of \cite{5}:
\[
DSO(G)\le \frac{\sqrt{2}}{2}\sqrt{H(G)\bigl(M_1(G)-H(G)\bigr)}.
\]
Indeed, since $d_ud_v \ge 1$ for every edge, we have
\[
2ISI(G)=\sum_{uv\in E(G)}\frac{2d_ud_v}{d_u+d_v}
\ge \sum_{uv\in E(G)}\frac{2}{d_u+d_v}=H(G).
\]
Hence $M_1(G)-2ISI(G) \le M_1(G)-H(G)$, which immediately yields
\[
\sqrt{\frac{H(G)}{2}\bigl(M_1(G)-2ISI(G)\bigr)}
\le
\sqrt{\frac{H(G)}{2}\bigl(M_1(G)-H(G)\bigr)}.
\]
Thus the new bound is always at least as sharp as the earlier one, with equality (for connected graphs) occurring only when $G\simeq K_2$.
\end{remark}
\begin{lemma}\label{newl}
For every $t\in[0,1]$, we have
$
\sqrt{1+t}\ge1+\frac{t}{4},
$
with equality if and only if $t=0$.
\end{lemma}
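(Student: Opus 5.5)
Both sides of the inequality are nonnegative for every $t\in[0,1]$, so we may square them without changing the direction of the inequality. Squaring, the claim becomes
\[
1+t \;\ge\; \Bigl(1+\frac{t}{4}\Bigr)^2 = 1+\frac{t}{2}+\frac{t^2}{16}.
\]
This is equivalent to
\[
\frac{t}{2}-\frac{t^2}{16} = \frac{t}{16}\,(8-t) \ge 0 .
\]
For $t\in[0,1]$ we have $t\ge 0$ and $8-t\ge 7>0$, so the product is nonnegative. Because squaring is monotone on nonnegative reals, the original inequality $\sqrt{1+t}\ge 1+t/4$ follows. In fact this argument works for all $t\in[0,8]$; the restriction to $[0,1]$ is only the range needed later.

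For the equality case, the chain above is a sequence of equivalences. Hence equality in the lemma holds exactly when $t(8-t)=0$. On $[0,1]$ the factor $8-t$ is strictly positive, so this forces $t=0$. Conversely, $t=0$ gives $1=1$.

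As an alternative, one could use concavity of $\sqrt{1+t}$: its chord from $(0,1)$ to $(1,\sqrt2)$ has slope $\sqrt2-1>1/4$, so the graph lies above the line $1+t/4$ on $[0,1]$. The squaring argument is cleaner, however, and gives the equality characterization directly.

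The only step needing any care is checking the sign of $8-t$ on the interval, which is immediate. I do not expect any real obstacle.
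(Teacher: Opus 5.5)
Your proof is correct, but your main argument takes a different route from the paper's.

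The paper sets $f(t)=\sqrt{1+t}-1-\frac{t}{4}$ and notes that $f''(t)<0$, so $f$ is concave on $[0,1]$. It then uses $f(0)=0$ and $f(1)=\sqrt2-\frac54>0$ to conclude that $f\ge 0$, with equality only at $t=0$. This is essentially the chord argument you mention as your alternative. Implicitly it rests on $f(t)\ge t\,f(1)$, which is what rules out equality for $t\in(0,1]$.

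Your squaring argument is purely algebraic. It turns the claim into the chain of equivalences ending in $t(8-t)\ge 0$, so the equality case falls out directly. It needs no appeal to the chord property of concave functions and no numerical comparison such as $\sqrt2>\frac54$. It also shows the inequality holds on the larger range $[0,8]$.

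The concavity route gives something different. Read through the chord, it yields the stronger bound $\sqrt{1+t}\ge 1+(\sqrt2-1)t$ on $[0,1]$. The paper in fact uses exactly that bound later, in the lower bound of Theorem~\ref{thm:DSO-sigma}, and Lemma~\ref{newl} follows from it immediately since $\sqrt2-1>\frac14$.
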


\begin{proof}
Let
$
f(t)=\sqrt{1+t}-1-\frac{t}{4}.
$
Since
$
f''(t)= -\frac1{4(1+t)^{3/2}}<0,
$
the function $f$ is concave on $[0,1]$. Also,
$
f(0)=0$ and $f(1)=\sqrt2-\frac54>0$.
Therefore $f(t)\ge0$ on $[0,1]$, and equality holds only at $t=0$.
\end{proof}

\begin{theorem}\label{new:Alb-upper}
Let $G$ be a graph with $m>0$ edges. Then
\[
\frac{m}{\sqrt{2}}+\frac{Alb(G)^2}{16\sqrt{2}\,m \Delta^2}\le DSO(G) \le \frac{m}{\sqrt{2}} + \frac{Alb(G)}{4\sqrt{2}\,\delta}.
\]
Equality holds if and only if $G$ is regular.
\end{theorem}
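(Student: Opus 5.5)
The plan is to work edge by edge and then sum, using the identity
\[
\frac{\sqrt{d_u^2+d_v^2}}{d_u+d_v}=\frac{1}{\sqrt2}\sqrt{1+\frac{(d_u-d_v)^2}{(d_u+d_v)^2}},
\]
which follows from $2(d_u^2+d_v^2)=(d_u+d_v)^2+(d_u-d_v)^2$. Put $t_{uv}=\frac{(d_u-d_v)^2}{(d_u+d_v)^2}\in[0,1)$.

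\textbf{Upper bound.} Use $\sqrt{1+t}\le 1+\frac t2$. This gives
\[
\frac{\sqrt{d_u^2+d_v^2}}{d_u+d_v}\le \frac1{\sqrt2}+\frac{(d_u-d_v)^2}{2\sqrt2(d_u+d_v)^2}.
\]
Then write $(d_u-d_v)^2=|d_u-d_v|\cdot|d_u-d_v|$. Without loss of generality $d_u\ge d_v\ge \delta$, so $|d_u-d_v|\le d_u+d_v$ and $d_u+d_v\ge 2\delta$. Hence
\[
\frac{(d_u-d_v)^2}{(d_u+d_v)^2}\le \frac{|d_u-d_v|}{d_u+d_v}\le \frac{|d_u-d_v|}{2\delta}.
\]
Each edge therefore contributes at most $\frac1{\sqrt2}+\frac{|d_u-d_v|}{4\sqrt2\,\delta}$. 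Summing over all edges gives the right-hand inequality.

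\textbf{Lower bound.} Apply Lemma~\ref{newl} with $t=t_{uv}$, which gives $\sqrt{1+t}\ge 1+t/4$. Since $d_u+d_v\le 2\Delta$, we get $t_{uv}\ge \frac{(d_u-d_v)^2}{4\Delta^2}$. Hence
\[
DSO(G)\ge \frac{m}{\sqrt2}+\frac{1}{4\sqrt2}\sum_{uv}\frac{(d_u-d_v)^2}{4\Delta^2}=\frac{m}{\sqrt2}+\frac{\sigma(G)}{16\sqrt2\,\Delta^2}.
\]
Cauchy--Schwarz gives $Alb(G)^2\le m\,\sigma(G)$, so $\sigma(G)\ge Alb(G)^2/m$, and the left-hand inequality follows.

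\textbf{Equality.} In every inequality used, equality forces $d_u=d_v$ on each edge. In the upper bound, $\sqrt{1+t}=1+t/2$ only at $t=0$. In the lower bound, Lemma~\ref{newl} is strict unless $t=0$. So each bound is attained only if every edge joins vertices of equal degree, and since $G$ is connected this means $G$ is regular. Conversely, for a regular graph $Alb(G)=0$ and $DSO(G)=m/\sqrt2$, so both sides equal $m/\sqrt2$.

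\textbf{Main obstacle.} The delicate point is the upper bound: the quadratic $(d_u-d_v)^2$ must be reduced to the linear $|d_u-d_v|$ with the correct constant $\frac{1}{4\sqrt2\,\delta}$. The bound $|d_u-d_v|/(d_u+d_v)^2\le 1/(2\delta)$ achieves exactly this constant. If it turned out too crude, a fallback would be $\frac{(d_u-d_v)^2}{(d_u+d_v)^2}\le\frac{|d_u-d_v|}{d_u+d_v}$ combined with a sharper concavity estimate. Checking the arithmetic gives $\frac{1}{\sqrt2}\cdot\frac12\cdot\frac1{2\delta}=\frac1{4\sqrt2\,\delta}$, which matches the statement.
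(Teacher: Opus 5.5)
Your proposal is correct and matches the paper's proof essentially step for step. Both arguments use the identity $\frac{\sqrt{d_u^2+d_v^2}}{d_u+d_v}=\frac{1}{\sqrt2}\sqrt{1+t_{uv}}$. For the upper bound, both combine $\sqrt{1+t}\le 1+t/2$ with $t_{uv}\le |d_u-d_v|/(d_u+d_v)\le |d_u-d_v|/(2\delta)$. For the lower bound, both use Lemma~\ref{newl}, then $d_u+d_v\le 2\Delta$, then $Alb(G)^2\le m\,\sigma(G)$. In both, the equality case comes from the fact that each square-root estimate is strict unless $t_{uv}=0$, so every edge has $d_u=d_v$ and connectivity gives regularity.
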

\begin{proof}
\textbf{Lower bound.}
From
\[
d_u^2+d_v^2=\frac{(d_u+d_v)^2+(d_u-d_v)^2}{2},
\]
we get
\begin{equation}\label{eqn19}
\frac{\sqrt{d_u^2+d_v^2}}{d_u+d_v}
=
\frac{1}{\sqrt{2}}
\sqrt{1+\left(\frac{d_u-d_v}{d_u+d_v}\right)^2}.
\end{equation}
Hence
\begin{equation}\label{eqn20}
DSO(G)
=
\frac{1}{\sqrt{2}}
\sum_{uv\in E(G)}
\sqrt{1+\left(\frac{d_u-d_v}{d_u+d_v}\right)^2}.
\end{equation}
By Lemma \ref{newl} for every $t\in[0,1]$, we have
\begin{equation}\label{eqn21}
\sqrt{1+t}\ge 1+\frac{t}{4}, 
\end{equation}
with equality if and only if $t=0$.
Also,
$
0\le \left(\frac{d_u-d_v}{d_u+d_v}\right)^2 \le 1.
$
Applying \eqref{eqn21} to \eqref{eqn20} gives
\[
DSO(G)
\ge 
\frac{1}{\sqrt{2}}
\sum_{uv\in E(G)}
\left(
1+\frac{1}{4}
\left(\frac{d_u-d_v}{d_u+d_v}\right)^2
\right).
\]
So
\begin{equation}\label{eqn22}
DSO(G)
\ge 
\frac{m}{\sqrt{2}}
+
\frac{1}{4\sqrt{2}}
\sum_{uv\in E(G)}
\frac{(d_u-d_v)^2}{(d_u+d_v)^2}. 
\end{equation}
Since $d_u+d_v\le 2\Delta$ for every edge,
\[
\frac{(d_u-d_v)^2}{(d_u+d_v)^2}
\ge 
\frac{(d_u-d_v)^2}{4\Delta^2},
\]
with equality whenever $d_u-d_v=0$ (i.e., both sides are zero) or $d_u+d_v=2\Delta$. Substituting into \eqref{eqn22} yields
\begin{equation}\label{eqn23}
DSO(G)
\ge 
\frac{m}{\sqrt{2}}
+
\frac{1}{16\sqrt{2}\,\Delta^2}
\sum_{uv\in E(G)} (d_u-d_v)^2. 
\end{equation}
By Cauchy–Schwarz,
\[
\left(\sum_{uv\in E(G)} |d_u-d_v|\right)^2
\le 
m\sum_{uv\in E(G)} (d_u-d_v)^2,
\]
with equality if and only if all $|d_u-d_v|$ are equal.
Hence
\begin{equation}\label{eqn24}
\sum_{uv\in E(G)} (d_u-d_v)^2
\ge 
\frac{Alb(G)^2}{m}. 
\end{equation}
Combining \eqref{eqn23} and \eqref{eqn24} gives the desired inequality.

Now suppose equality holds. Then inequalities \eqref{eqn21}, the step leading to \eqref{eqn23}, and \eqref{eqn24} must all be equalities.
Equality in \eqref{eqn21} forces $\left(\frac{d_u-d_v}{d_u+d_v}\right)^2=0$ for every edge, i.e., $d_u=d_v$ for all edges.
Equality in the step $ \frac{(d_u-d_v)^2}{(d_u+d_v)^2} \ge \frac{(d_u-d_v)^2}{4\Delta^2} $ is automatic when $d_u-d_v=0$ (both sides are zero), regardless of whether $d_u+d_v=2\Delta$. Thus it imposes no extra condition beyond $d_u=d_v$.
Equality in \eqref{eqn24} is automatic because all $|d_u-d_v|$ are zero, making both sides zero.
Hence equality holds precisely when $d_u=d_v$ for every edge. Since $G$ is connected, this forces all vertices to have the same degree, i.e., $G$ is regular. Conversely, if $G$ is regular of degree $r$, then $Alb(G)=0$ and $DSO(G)=m/\sqrt{2}$; the right-hand side becomes $m/\sqrt{2}+0 = m/\sqrt{2}$, so equality is attained.

\noindent
\textbf{Upper bound.}
Using the identity
\[
\frac{\sqrt{d_u^2+d_v^2}}{d_u+d_v}
= \frac{1}{\sqrt{2}}\sqrt{1+\left(\frac{d_u-d_v}{d_u+d_v}\right)^2},
\]
and the elementary inequality $\sqrt{1+t}\le 1+\frac{t}{2}$ for $t\ge0$, we get
\[
DSO(G) \le \frac{m}{\sqrt{2}} + \frac{1}{2\sqrt{2}}\sum_{uv\in E(G)} \frac{(d_u-d_v)^2}{(d_u+d_v)^2}.
\]
Since $d_u+d_v \ge 2\delta$, we have $\frac{1}{(d_u+d_v)^2} \le \frac{1}{4\delta^2}$. Also, $|d_u-d_v| \le d_u+d_v$, hence $(d_u-d_v)^2 \le (d_u+d_v)|d_u-d_v|$. Therefore,
\[
\frac{(d_u-d_v)^2}{(d_u+d_v)^2} \le \frac{|d_u-d_v|}{d_u+d_v} \le \frac{|d_u-d_v|}{2\delta}.
\]
Substituting this into the previous inequality gives
\[
DSO(G) \le \frac{m}{\sqrt{2}} + \frac{1}{2\sqrt{2}}\sum_{uv\in E(G)} \frac{|d_u-d_v|}{2\delta}
= \frac{m}{\sqrt{2}} + \frac{Alb(G)}{4\sqrt{2}\,\delta}.
\]
Suppose equality holds.
Then equality must occur in every inequality used above.
In particular, equality in
$
\sqrt{1+t}\le1+\frac t2
$
forces
$t=0$,
that is,
$d_u=d_v$
for every edge.
Hence all remaining inequalities are also equalities,
and, since $G$ is connected,
$G$ is regular.
Conversely, if $G$ is regular, then $Alb(G)=0$ and both sides equal $m/\sqrt{2}$.
\end{proof}

\begin{remark}
Comparing the upper bound obtained in Theorem~\ref{new:Alb-upper},
\[
DSO(G) \le \frac{m}{\sqrt{2}} + \frac{Alb(G)}{4\sqrt{2}\,\delta},
\]
with the previously known bound in \cite{5},
\[
DSO(G) \le \frac{\sqrt{2}}{2}\bigl(Alb(G)+m\bigr)
= \frac{m}{\sqrt{2}} + \frac{Alb(G)}{\sqrt{2}},
\]
we observe that our bound is strictly sharper for every non-regular graph, since $\delta \ge 1$ implies $4\sqrt{2}\,\delta \ge 4\sqrt{2} > \sqrt{2}$. In the regular case, both bounds reduce to the sharp value $m/\sqrt{2}$.

Regarding the lower bound, Theorem 3.1 of \cite{5} provides a mixed bound involving both the Albertson and the geometric-arithmetic indices:
\[
DSO(G) \ge \frac{\sqrt{2}}{4\Delta}Alb(G) + \frac{1}{2}GA(G).
\]
The lower bound presented here,
\[
DSO(G) \ge \frac{m}{\sqrt{2}} + \frac{Alb(G)^2}{16\sqrt{2}\,m\,\Delta^2},
\]
is of a complementary nature. It depends solely on $m$, $Alb$, and $\Delta$, and performs particularly well when $Alb(G)$ is large (e.g., for stars), whereas the bound from \cite{5} benefits from the additional $GA(G)$ term in near-regular graphs. Thus neither lower bound uniformly dominates the other, and they are best viewed as complementary tools.
\end{remark}

\begin{theorem}\label{alt}
Let $G$ be a  graph with $m>0$ edges. Then
\[
DSO(G) \le \sqrt{\frac{m^{2}}{2}+\frac{m}{8}\bigl(F(G)-2M_{2}(G)\bigr)}.
\]
Equality holds if and only if $G$ is regular.
\end{theorem}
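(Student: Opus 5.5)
Note that $F(G)-2M_2(G)=\sum_{uv\in E(G)}(d_u-d_v)^2=\sigma(G)$. The target inequality, after squaring, reads
\[
DSO(G)^2\le \frac{m^2}{2}+\frac{m}{8}\sum_{uv\in E(G)}(d_u-d_v)^2 .
\]
So the plan mirrors the proof of Theorem~\ref{DSO-M2-Delta}. First apply Cauchy--Schwarz to get $DSO(G)^2\le m\sum_{uv} x_{uv}^2$ with $x_{uv}=\sqrt{d_u^2+d_v^2}/(d_u+d_v)$. Then bound each $x_{uv}^2$ edgewise.

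Using the identity \eqref{eqn19},
\[
x_{uv}^2=\frac12\left(1+\frac{(d_u-d_v)^2}{(d_u+d_v)^2}\right).
\]
It therefore suffices to show, for every edge,
\[
\frac{(d_u-d_v)^2}{(d_u+d_v)^2}\le \frac{(d_u-d_v)^2}{4}.
\]
This holds because $d_u+d_v\ge 2$ (since $\delta\ge1$), so $(d_u+d_v)^2\ge 4$. Summing gives
\[
\sum_{uv} x_{uv}^2\le \frac m2+\frac18\sum_{uv}(d_u-d_v)^2 .
\]
Multiplying by $m$ yields the claimed bound. The edgewise step is actually quite slack; the constant $1/8$ is what one gets from $(d_u+d_v)^2\ge4$.

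For equality, Cauchy--Schwarz forces $x_{uv}$ to be constant. The edgewise inequality forces, on each edge, either $d_u=d_v$ or $d_u+d_v=2$. The second case means $d_u=d_v=1$, so in either case $d_u=d_v$ on every edge. By connectedness $G$ is then regular. Conversely, for a regular graph both sides equal $m/\sqrt2$, since $F=2M_2$.

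The main obstacle is only bookkeeping in the equality analysis. The edgewise bound, not Cauchy--Schwarz, is what forces regularity, and the case $d_u+d_v=2$ has to be checked separately (it reduces to $K_2$, which is regular).
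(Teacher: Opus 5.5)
Your proposal is correct and follows essentially the same route as the paper: Cauchy--Schwarz reduces the problem to $\sum x_{uv}^2$, then you use $x_{uv}^2=\tfrac12+\tfrac12\frac{(d_u-d_v)^2}{(d_u+d_v)^2}$, the edgewise estimate from $(d_u+d_v)^2\ge 4$, and the identity $\sigma(G)=F(G)-2M_2(G)$. Your equality analysis also matches the paper's: the edgewise step forces $d_u=d_v$ on every edge (the case $d_u+d_v=2$ gives $d_u=d_v=1$), and connectedness then gives regularity.
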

\begin{proof}[Proof of the alternative bound]
By the Cauchy--Schwarz inequality,
\[
DSO(G)^{2} \le m\sum_{uv\in E(G)} \frac{d_u^{2}+d_v^{2}}{(d_u+d_v)^{2}}.
\]
For each edge,
\[
\frac{d_u^{2}+d_v^{2}}{(d_u+d_v)^{2}}
= \frac12+\frac12\cdot\frac{(d_u-d_v)^{2}}{(d_u+d_v)^{2}}.
\]
Summation over all edges yields
\[
\sum_{uv\in E(G)}\frac{d_u^{2}+d_v^{2}}{(d_u+d_v)^{2}}
= \frac{m}{2}+\frac12\sum_{uv\in E(G)}\frac{(d_u-d_v)^{2}}{(d_u+d_v)^{2}}.
\]
Because $d_u,d_v\ge 1$, we have $d_u+d_v\ge 2$ and thus $(d_u+d_v)^{2}\ge 4$.
Consequently,
\[
\frac{(d_u-d_v)^{2}}{(d_u+d_v)^{2}} \le \frac{(d_u-d_v)^{2}}{4}
\]
for every edge. Summing gives
\[
\sum_{uv\in E(G)}\frac{(d_u-d_v)^{2}}{(d_u+d_v)^{2}}
\le \frac14\sum_{uv\in E(G)}(d_u-d_v)^{2}
= \frac{\sigma(G)}{4}.
\]
Therefore,
\[
\sum_{uv\in E(G)}\frac{d_u^{2}+d_v^{2}}{(d_u+d_v)^{2}}
\le \frac{m}{2}+\frac{\sigma(G)}{8}.
\]
Plugging this into the Cauchy--Schwarz bound,
\[
DSO(G)^{2} \le m\Bigl(\frac{m}{2}+\frac{\sigma(G)}{8}\Bigr)
= \frac{m^{2}}{2}+\frac{m}{8}\,\sigma(G).
\]
Now $\sigma(G)=\sum_{uv\in E(G)}(d_u^{2}+d_v^{2}-2d_ud_v)=F(G)-2M_{2}(G)$, so we obtain the claimed inequality.

\medskip
\noindent\textit{Equality analysis.}
Assume equality holds in the final bound.
Then equality must hold in both the Cauchy--Schwarz inequality and the estimate
\[
\frac{(d_u-d_v)^{2}}{(d_u+d_v)^{2}} \le \frac{(d_u-d_v)^{2}}{4} \qquad\text{for every edge}.
\]
Equality in
\[
\frac{(d_u-d_v)^2}{(d_u+d_v)^2}
\le
\frac{(d_u-d_v)^2}{4},
\]
holds if and only if either $d_u=d_v$ (when both sides are zero) or
$(d_u+d_v)^2=4$.
Since $d_u,d_v\ge1$, the latter also implies $d_u=d_v=1$.
Hence equality is equivalent to $d_u=d_v$.
Hence every edge joins vertices of equal degree. Since $G$ is connected,
all vertices have the same degree, and therefore $G$ is regular. For any such edge with $d_u=d_v=r$, we have
\[
\frac{\sqrt{d_u^{2}+d_v^{2}}}{d_u+d_v} = \frac{\sqrt{2r^{2}}}{2r} = \frac{1}{\sqrt{2}},
\]
a constant independent of the edge. Thus the terms in the Cauchy--Schwarz sum are all equal, and equality in the Cauchy--Schwarz inequality is automatically satisfied.
Consequently, equality holds precisely for graphs whose components are regular.
For such graphs $F(G)-2M_{2}(G)=0$, and the bound reduces to $m/\sqrt{2}$, which equals $DSO(G)$.
This completes the characterization.
\end{proof}

\begin{remark}\label{rem:comparison-upper}
We briefly compare the two upper bounds for $DSO(G)$ obtained in Theorems \ref{DSO-M1} and \ref{alt}.

The bound from Theorem~\ref{DSO-M1},
\[
DSO(G)\le m\sqrt{\frac{M_1(G)-m}{M_1(G)}},
\]
is sharp only for $K_2$. For an $r$-regular graph with $r>1$ we have $M_1(G)=2mr$, and the bound becomes $m\sqrt{1-\frac{1}{2r}}$, which tends to $m$ as $r\to\infty$, while the true value is $m/\sqrt2\approx0.707m$. Thus it is very weak for dense regular graphs.
On the other hand, for some irregular graphs such as stars it gives a reasonable estimate: e.g., for $K_{1,4}$ we have $m=4$, $M_1=20$, the bound evaluates to $4\sqrt{0.8}\approx3.578$, while the exact $DSO$ is $\approx3.298$.

The alternative bound derived in Theorem~\ref{alt}
\[
DSO(G)\le\sqrt{\frac{m^2}{2}+\frac{m}{8}\bigl(F(G)-2M_2(G)\bigr)},
\]
is exact for every regular graph, because then
$F(G)-2M_2(G)=0$
and the bound reduces to $m/\sqrt2$.
However, its quality deteriorates when the graph is far from regular. For the star $K_{1,4}$ it gives $\sqrt{26}\approx5.099$, which is significantly weaker than the bound from Theorem~\ref{DSO-M1}.
In general, the term $\frac{m}{8}(F-2M_2)=\frac{m}{8}\sigma(G)$ increases with the irregularity of the graph, so the bound becomes looser for highly irregular graphs.

In summary, neither bound uniformly dominates the other.
The bound of Theorem~\ref{DSO-M1} is often tighter for highly irregular graphs,
such as stars, whereas the alternative bound is exact for regular graphs and
typically performs better for graphs that are close to regular.
Thus the two bounds complement each other.
\end{remark}

\begin{theorem}\label{DSO-F}
Let $G$ be a graph with $m>0$ edges.
Then
\[
\frac{F(G)}{2\sqrt{2}\,\Delta^{2}} \;\le\; DSO(G) \;\le\; \frac{1}{2\delta}\sqrt{m\,F(G)}.
\]
Equality holds in both bounds if and only if $G$ is regular.
\end{theorem}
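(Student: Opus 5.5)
Both bounds will be proved edgewise, in the style of Theorem~\ref{DSO-R}, and then summed over the edges. For the lower bound I want, for every edge $uv$,
\[
\frac{\sqrt{d_u^2+d_v^2}}{d_u+d_v}\ \ge\ \frac{d_u^2+d_v^2}{2\sqrt2\,\Delta^2}.
\]
Write $s=\sqrt{d_u^2+d_v^2}$. Dividing by $s>0$, the claim is equivalent to $2\sqrt2\,\Delta^2\ge s(d_u+d_v)$. Since $d_u,d_v\le\Delta$, we have $s\le\sqrt2\,\Delta$ and $d_u+d_v\le 2\Delta$. Multiplying these gives $s(d_u+d_v)\le 2\sqrt2\,\Delta^2$. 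Equality forces $d_u=d_v=\Delta$. Summing over all edges yields $DSO(G)\ge F(G)/(2\sqrt2\,\Delta^2)$.

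If equality holds in the lower bound, every edge joins two $\Delta$-vertices. By connectedness, and since every vertex has an edge, $G$ is $\Delta$-regular. Conversely, for an $r$-regular graph we have $F(G)=2mr^2$, so the bound equals $m/\sqrt2=DSO(G)$.

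For the upper bound I will follow the Cauchy--Schwarz scheme of Theorem~\ref{M1new}:
\[
DSO(G)^2\le m\sum_{uv}\frac{d_u^2+d_v^2}{(d_u+d_v)^2}.
\]
Since $d_u+d_v\ge2\delta$, each term is at most $(d_u^2+d_v^2)/(4\delta^2)$. Hence $DSO(G)^2\le mF(G)/(4\delta^2)$, which is exactly the claimed bound after taking square roots.

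The main obstacle is the equality analysis of the upper bound. Equality in the per-edge step requires $d_u+d_v=2\delta$, that is, $d_u=d_v=\delta$ on every edge. Connectedness then gives regularity, and in a regular graph all terms equal $1/\sqrt2$, so the Cauchy--Schwarz step is also tight. I should double-check the converse computation here. For an $r$-regular graph,
\[
\frac{1}{2r}\sqrt{m\cdot 2mr^2}=\frac{m\sqrt2}{2}=\frac{m}{\sqrt2},
\]
so the converse holds.
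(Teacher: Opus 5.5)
Your proposal is correct and follows the paper's proof essentially step for step. The upper bound uses the same Cauchy--Schwarz step followed by $d_u+d_v\ge 2\delta$. The lower bound uses the same edgewise estimate from $\sqrt{d_u^2+d_v^2}\le\sqrt{2}\,\Delta$ and $d_u+d_v\le 2\Delta$; dividing by $\sqrt{d_u^2+d_v^2}$ is just a rearrangement of the paper's two factor bounds. The equality analyses are also the same.
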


\begin{proof}
\noindent\textbf{Upper bound.}
By the Cauchy--Schwarz inequality,
\[
DSO(G)^{2} \le m\sum_{uv\in E(G)}\frac{d_u^{2}+d_v^{2}}{(d_u+d_v)^{2}}.
\]
Since $d_u+d_v \ge 2\delta$ for every edge, we have $\frac{1}{(d_u+d_v)^{2}} \le \frac{1}{4\delta^{2}}$.
Hence
\[
DSO(G)^{2} \le m\sum_{uv\in E(G)}\frac{d_u^{2}+d_v^{2}}{4\delta^{2}}
= \frac{m}{4\delta^{2}}\,F(G),
\]
and taking square roots gives the claimed upper bound.

Equality in the Cauchy--Schwarz step requires the terms $\frac{\sqrt{d_u^{2}+d_v^{2}}}{d_u+d_v}$ to be equal for all edges, while equality in $\frac{1}{(d_u+d_v)^{2}} \le \frac{1}{4\delta^{2}}$ forces $d_u+d_v = 2\delta$ for every edge.
Together, these imply $d_u = d_v = \delta$ for every edge, so every vertex has degree $\delta$; thus $G$ is $\delta$-regular.
Conversely, if $G$ is regular of degree $r$, then $\delta=r$, $F(G)=2mr^{2}$, and the right‑hand side becomes $\frac{1}{2r}\sqrt{m\cdot 2mr^{2}} = \frac{m}{\sqrt{2}} = DSO(G)$.

\medskip
\noindent\textbf{Lower bound.}
For a single edge $uv$ we have $d_u+d_v \le 2\Delta$ and $d_u^{2}+d_v^{2} \le 2\Delta^{2}$.
Since $d_u^{2}+d_v^{2} \le 2\Delta^{2}$, we have
\[
\sqrt{d_u^{2}+d_v^{2}} \ge \frac{d_u^{2}+d_v^{2}}{\sqrt{2\Delta^{2}}}
= \frac{d_u^{2}+d_v^{2}}{\sqrt{2}\,\Delta}.
\]
Together with $\frac{1}{d_u+d_v} \ge \frac{1}{2\Delta}$ this yields
\[
\frac{\sqrt{d_u^{2}+d_v^{2}}}{d_u+d_v}
\ge \frac{d_u^{2}+d_v^{2}}{2\sqrt{2}\,\Delta^{2}}.
\]
Summing over all edges gives
\[
DSO(G) \ge \frac{1}{2\sqrt{2}\,\Delta^{2}}\sum_{uv}(d_u^{2}+d_v^{2})
= \frac{F(G)}{2\sqrt{2}\,\Delta^{2}}.
\]

Equality in the lower bound requires both $d_u+d_v = 2\Delta$ and $d_u^{2}+d_v^{2} = 2\Delta^{2}$ for every edge, which together force $d_u = d_v = \Delta$.
Thus every edge joins two vertices of degree $\Delta$; as $\Delta\ge 1$, every vertex has degree $\Delta$ and $G$ is $\Delta$-regular.
Conversely, if $G$ is regular of degree $r=\Delta$, then $F(G)=2mr^{2}$ and the right‑hand side equals $\frac{2mr^{2}}{2\sqrt{2}\,r^{2}} = \frac{m}{\sqrt{2}} = DSO(G)$.
\end{proof}

\begin{remark}\label{rem:compare-B-C}
Both the bound of Theorem~\ref{DSO-F} and that of Theorem~\ref{Delta-delta} are never smaller than $m/\sqrt{2}$, with equality exactly for regular graphs.
Nevertheless, neither bound is universally stronger.
Theorem~\ref{Delta-delta} is exact for every bipartite semiregular graph (including stars), while Theorem~\ref{DSO-F} is exact only for regular graphs.
When the graph contains many edges joining vertices of degree $\Delta$ and $\delta$, the bound $m\frac{\sqrt{\Delta^{2}+\delta^{2}}}{\Delta+\delta}$ is close to the true $DSO$ and can be significantly smaller than $\frac{1}{2\delta}\sqrt{m\,F(G)}$; a star $K_{1,n-1}$ illustrates this sharply.
On the other hand, if most edges connect vertices whose degrees are near the minimum $\delta$, the factor $1/(2\delta)$ may give Theorem~\ref{DSO-F} a slight advantage.
For instance, a graph with $\delta=2$, $\Delta=3$ and a large number of $(2,2)$-edges yields a bound from Theorem~\ref{DSO-F} that is close to $0.7071m$ (approaching this value as the proportion of $(2,2)$-edges increases), while the bound from Theorem~\ref{Delta-delta} equals $0.7212m$.
\end{remark}

\begin{theorem}\label{thm:DSO-sigma}
Let $G$ be a graph with $m>0$ edges. Then
\[
\frac{m}{\sqrt{2}} + \frac{\sqrt{2}-1}{4\sqrt{2}} \cdot \frac{\sigma(G)}{\Delta^2}
\;\le\; DSO(G)
\;\le\; \frac{m}{\sqrt{2}} + \frac{\sigma(G)}{8\sqrt{2}\,\delta^2}.
\]
Equality holds in both bound if and only if $G$ is regular.
\end{theorem}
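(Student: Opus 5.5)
Throughout, write each edge term via identity \eqref{eqn19}:
\[
\frac{\sqrt{d_u^2+d_v^2}}{d_u+d_v}=\frac{1}{\sqrt2}\sqrt{1+t_{uv}},\qquad t_{uv}=\left(\frac{d_u-d_v}{d_u+d_v}\right)^2\in[0,1).
\]
Both bounds then follow from edgewise one-variable estimates of $\sqrt{1+t}$, followed by bounding $(d_u+d_v)^2$ between $4\delta^2$ and $4\Delta^2$, and summing to get $\sigma(G)$.

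\textbf{Upper bound.} Use $\sqrt{1+t}\le 1+t/2$ for $t\ge0$, with equality iff $t=0$. This gives
\[
DSO(G)\le \frac{m}{\sqrt2}+\frac{1}{2\sqrt2}\sum_{uv\in E(G)}\frac{(d_u-d_v)^2}{(d_u+d_v)^2}.
\]
Since $(d_u+d_v)^2\ge 4\delta^2$, each summand is at most $(d_u-d_v)^2/(4\delta^2)$. Summing yields $\frac{m}{\sqrt2}+\frac{\sigma(G)}{8\sqrt2\,\delta^2}$. For equality, the first inequality must be tight on every edge, so $t_{uv}=0$ for all edges. Hence $d_u=d_v$ on every edge, and by connectivity $G$ is regular. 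Conversely, for a regular graph $\sigma(G)=0$ and $DSO(G)=m/\sqrt2$, so equality holds.

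\textbf{Lower bound.} Use the chord (secant) inequality for the concave function $\sqrt{1+t}$ on $[0,1]$:
\[
\sqrt{1+t}\ge 1+(\sqrt2-1)t,
\]
with equality only at $t=0$ and $t=1$. Since $t_{uv}<1$ always (degrees are positive), equality occurs only at $t=0$. This gives
\[
DSO(G)\ge \frac{m}{\sqrt2}+\frac{\sqrt2-1}{\sqrt2}\sum_{uv\in E(G)}\frac{(d_u-d_v)^2}{(d_u+d_v)^2}.
\]
Then use $(d_u+d_v)^2\le 4\Delta^2$ to bound each summand below by $(d_u-d_v)^2/(4\Delta^2)$. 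This yields exactly the coefficient $\frac{\sqrt2-1}{4\sqrt2}\cdot\frac{\sigma(G)}{\Delta^2}$. This chord inequality is the key choice: Lemma~\ref{newl} would only give the weaker constant $1/4<\sqrt2-1$, so it cannot be used here. The proof should verify the chord inequality directly. Set $g(t)=\sqrt{1+t}-1-(\sqrt2-1)t$. Then $g$ is concave with $g(0)=g(1)=0$, so $g\ge0$ on $[0,1]$, and $g>0$ on $(0,1)$.

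\textbf{Equality in the lower bound.} Equality forces $g(t_{uv})=0$ on every edge. With $t_{uv}<1$ this means $t_{uv}=0$, so $G$ is regular. The regular case attains equality because $\sigma(G)=0$. The step needing the most care is the equality analysis of the second relaxation: when $d_u\ne d_v$ the bound $(d_u+d_v)^2\le4\Delta^2$ can be tight, so strictness must come from $g$, not from this step. Since the first inequality already forces $t_{uv}=0$, the remaining relaxation is automatically tight, and both equality characterizations reduce to regularity.
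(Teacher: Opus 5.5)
Your proof is correct and follows the paper's argument essentially step for step. Both use the identity with $t_{uv}$. The upper bound combines the tangent bound $\sqrt{1+t}\le 1+t/2$ with $(d_u+d_v)^2\ge 4\delta^2$, and the lower bound combines the chord inequality $\sqrt{1+t}\ge 1+(\sqrt2-1)t$ with $(d_u+d_v)^2\le 4\Delta^2$; in both cases equality is forced by $t_{uv}=0$ on every edge.

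One small correction, which does not affect your argument: your closing aside is wrong that the $\Delta$-relaxation "can be tight" when $d_u\ne d_v$. Since $d_u,d_v\le\Delta$, having $d_u\ne d_v$ forces $d_u+d_v<2\Delta$, so that step is strict on such edges. The same holds for the $\delta$-relaxation in the upper bound. Your equality analysis, which takes strictness from $g$, is still valid.
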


\begin{proof}
For each edge $uv\in E(G)$, set
$
t_{uv} = \left(\frac{d_u-d_v}{d_u+d_v}\right)^2.
$
Since $d_u,d_v\ge 1$, we have $0 \le t_{uv} < 1$. From the identity
\[
\frac{\sqrt{d_u^2+d_v^2}}{d_u+d_v}
= \frac{1}{\sqrt{2}}\sqrt{1+t_{uv}},
\]
we obtain
\[
DSO(G) = \frac{1}{\sqrt{2}} \sum_{uv\in E(G)} \sqrt{1+t_{uv}}.
\]

\medskip
\noindent \textbf{Upper bound.}  
Using the elementary inequality $\sqrt{1+t} \le 1+\frac{t}{2}$ for $t\ge 0$, we get
\[
DSO(G) \le \frac{m}{\sqrt{2}} + \frac{1}{2\sqrt{2}} \sum_{uv\in E(G)} \frac{(d_u-d_v)^2}{(d_u+d_v)^2}.
\]
Since $d_u+d_v \ge 2\delta$, we have $\frac{1}{(d_u+d_v)^2} \le \frac{1}{4\delta^2}$. Therefore,
\[
DSO(G) \le \frac{m}{\sqrt{2}} + \frac{1}{8\sqrt{2}\,\delta^2} \sum_{uv\in E(G)} (d_u-d_v)^2
= \frac{m}{\sqrt{2}} + \frac{\sigma(G)}{8\sqrt{2}\,\delta^2}.
\]

For equality in the upper bound, we need equality in both inequalities used. Equality in $\sqrt{1+t} \le 1+t/2$ occurs only when $t=0$, which gives $d_u=d_v$ for every edge. Equality in $\frac{1}{(d_u+d_v)^2} \le \frac{1}{4\delta^2}$ requires $d_u+d_v=2\delta$ for every edge. Together, these force $d_u=d_v=\delta$ for every edge, so $G$ is regular. Conversely, if $G$ is regular, then $\sigma(G)=0$ and the bound becomes $DSO(G)=m/\sqrt{2}$, which is equality.

\medskip
\noindent \textbf{Lower bound.}  
Since $0 \le t < 1$, the function $\sqrt{1+t}$ is concave on $[0,1]$, so its graph lies above the chord joining $(0,1)$ and $(1,\sqrt{2})$. Thus,
\[
\sqrt{1+t} \ge 1 + (\sqrt{2}-1)t.
\]
Therefore,
\[
DSO(G) \ge \frac{m}{\sqrt{2}} + \frac{\sqrt{2}-1}{\sqrt{2}} \sum_{uv} \frac{(d_u-d_v)^2}{(d_u+d_v)^2}.
\]
Using $d_u+d_v \le 2\Delta$, we have $\frac{1}{(d_u+d_v)^2} \ge \frac{1}{4\Delta^2}$, hence
\[
DSO(G) \ge \frac{m}{\sqrt{2}} + \frac{\sqrt{2}-1}{4\sqrt{2}} \cdot \frac{1}{\Delta^2} \sum_{uv} (d_u-d_v)^2
= \frac{m}{\sqrt{2}} + \frac{\sqrt{2}-1}{4\sqrt{2}} \cdot \frac{\sigma(G)}{\Delta^2}.
\]

For equality in the lower bound, we need equality in both inequalities. Equality in the chord inequality occurs at $t=0$ or $t=1$. The case $t=1$ would require $|d_u-d_v|=d_u+d_v$, which implies one of the degrees is zero; this is impossible because every vertex has degree at least 1. Hence we must have $t=0$, so $d_u=d_v$ for every edge. Equality in $\frac{1}{(d_u+d_v)^2} \ge \frac{1}{4\Delta^2}$ requires $d_u+d_v=2\Delta$ for every edge. Together, these give $d_u=d_v=\Delta$, so $G$ is regular. Conversely, if $G$ is regular, then $\sigma(G)=0$ and the bound becomes equality.
\end{proof}

\begin{remark}
The proof of Theorem~\ref{alt} yields the intermediate bound
\[
DSO(G)^{2} \le \frac{m^{2}}{2}+\frac{m}{8}\,\sigma(G),
\]
which is an upper bound for $DSO(G)$ solely in terms of $m$ and $\sigma(G)$.
Comparing this with the upper bound of Theorem~\ref{thm:DSO-sigma},
\[
DSO(G) \le \frac{m}{\sqrt{2}} + \frac{\sigma(G)}{8\sqrt{2}\,\delta^{2}},
\]
we observe that neither dominates the other in all cases.

Squaring the latter gives
\[
\left(\frac{m}{\sqrt{2}} + \frac{\sigma(G)}{8\sqrt{2}\,\delta^{2}}\right)^{2}
= \frac{m^{2}}{2} + \frac{m\sigma(G)}{8\delta^{2}} + \frac{\sigma(G)^{2}}{128\,\delta^{4}}.
\]
Hence the difference between the two squared bounds is
\[
\Bigl(\frac{m^{2}}{2}+\frac{m}{8}\sigma(G)\Bigr)
- \Bigl(\frac{m^{2}}{2}+\frac{m\sigma(G)}{8\delta^{2}}+\frac{\sigma(G)^{2}}{128\delta^{4}}\Bigr)
= \frac{m\sigma(G)}{8}\Bigl(1-\frac{1}{\delta^{2}}\Bigr) - \frac{\sigma(G)^{2}}{128\delta^{4}}.
\]

For $\delta=1$ the first term vanishes and the expression equals $-\sigma(G)^{2}/128$, so
\[
\frac{m^{2}}{2}+\frac{m}{8}\sigma(G)
< \Bigl(\frac{m}{\sqrt{2}}+\frac{\sigma(G)}{8\sqrt{2}}\Bigr)^{2},
\]
showing that the bound of Theorem~\ref{alt} is strictly sharper for graphs with minimum degree $1$ (such as stars).
For $\delta\ge 2$, if $\sigma(G)$ is small relative to $m$, the positive term $\frac{m\sigma(G)}{8}(1-\delta^{-2})$ dominates, making the bound of Theorem~\ref{thm:DSO-sigma} tighter.
In summary, the two bounds complement each other: Theorem~\ref{alt} is preferable when $\delta=1$, while Theorem~\ref{thm:DSO-sigma} often gives a better estimate for larger $\delta$ and moderate irregularity.
\end{remark}

Let $G$ be a graph.
Since $Alb(G) \le irr_t(G)$,  Theorem~\ref{new:Alb-upper} implies that
\[
DSO(G) \le \frac{m}{\sqrt{2}} + \frac{irr_t(G)}{4\sqrt{2}\,\delta}.
\]
We now improve this result for non-regular graphs as follows.
\begin{theorem}\label{new:irr-upper-implicit}
Let $G$ be a non-regular graph.
Then
\[
DSO(G) \le \frac{m}{\sqrt{2}} + \frac{1}{4\sqrt{2}\,\delta}
\left( irr_t(G) - \frac{\bigl(n M_1(G)-4m^2\bigr)-\sigma(G)}{\Delta-\delta} \right).
\]
If $G$ is regular, equality $DSO(G)=m/\sqrt{2}$ holds trivially.
\end{theorem}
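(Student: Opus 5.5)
The plan is to reduce the statement to the upper bound of Theorem~\ref{new:Alb-upper},
\[
DSO(G)\le \frac{m}{\sqrt{2}}+\frac{Alb(G)}{4\sqrt{2}\,\delta}.
\]
Since the coefficient $\frac{1}{4\sqrt2\,\delta}$ is positive, it suffices to prove the purely degree-theoretic inequality
\[
Alb(G)\le irr_t(G)-\frac{\bigl(nM_1(G)-4m^2\bigr)-\sigma(G)}{\Delta-\delta}.
\]
Because $G$ is non-regular and connected, $\Delta>\delta$, so the denominator is positive.

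\textbf{Step 1: Lagrange identity.} Apply the Lagrange identity to the degree sequence:
\[
n\sum_{v}d_v^2-\Bigl(\sum_v d_v\Bigr)^2=\sum_{\{u,v\}\subseteq V(G)}(d_u-d_v)^2 .
\]
Since $\sum_v d_v=2m$ and $\sum_v d_v^2=M_1(G)$, this gives
\[
nM_1(G)-4m^2=\sum_{\{u,v\}}(d_u-d_v)^2 .
\]
The sum over all pairs splits into adjacent and non-adjacent pairs. The adjacent part is exactly $\sigma(G)$, so
\[
\bigl(nM_1(G)-4m^2\bigr)-\sigma(G)=\sum_{\{u,v\},\,uv\notin E(G)}(d_u-d_v)^2 .
\]

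\textbf{Step 2: the same split for $irr_t$.} Splitting $irr_t(G)$ the same way gives
\[
irr_t(G)-Alb(G)=\sum_{\{u,v\},\,uv\notin E(G)}|d_u-d_v| .
\]

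\textbf{Step 3: termwise comparison.} For every pair we have $|d_u-d_v|\le\Delta-\delta$, hence
\[
(d_u-d_v)^2\le(\Delta-\delta)\,|d_u-d_v| .
\]
Summing over the non-adjacent pairs and dividing by $\Delta-\delta>0$ yields
\[
\frac{\bigl(nM_1(G)-4m^2\bigr)-\sigma(G)}{\Delta-\delta}\le irr_t(G)-Alb(G),
\]
which is exactly the required inequality for $Alb(G)$.

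\textbf{Step 4: conclusion.} Substituting this into Theorem~\ref{new:Alb-upper} gives the stated bound. The bound improves on the one derived from $Alb(G)\le irr_t(G)$, since the subtracted term is nonnegative. The regular case is excluded from the main statement only to avoid dividing by $\Delta-\delta=0$; there $DSO(G)=m/\sqrt2$ directly.

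The only non-routine step is Step 1: recognising that $nM_1(G)-4m^2$ is the Lagrange-identity sum over all vertex pairs, so that removing $\sigma(G)$ leaves precisely the non-adjacent pairs. After that, the argument is a termwise estimate. A possible refinement would be an equality analysis: equality requires every non-adjacent pair to have degree difference in $\{0,\Delta-\delta\}$, together with regularity from Theorem~\ref{new:Alb-upper}. However, the statement does not claim an equality characterisation for non-regular graphs, so I would not pursue it.
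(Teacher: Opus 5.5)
Your proposal is correct and follows essentially the same route as the paper. Both arguments bound the non-edge part $irr_t(G)-Alb(G)$ from below using $(d_u-d_v)^2\le(\Delta-\delta)|d_u-d_v|$ together with the identity $\sum_{\{u,v\}}(d_u-d_v)^2=nM_1(G)-4m^2$, and then substitute the resulting upper bound on $Alb(G)$ into Theorem~\ref{new:Alb-upper}. Your derivation of that identity via Lagrange's identity simply spells out what the paper calls a ``standard identity.''
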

\begin{proof}
Assume $\Delta>\delta$, since otherwise $G$ is regular and the statement is trivial. For any two numbers $a,b$ with $|a-b|\le \Delta-\delta$,
\[
|a-b| \ge \frac{(a-b)^2}{\Delta-\delta}.
\]
Applying this to every non-edge $\{u,v\}$ and summing yields
\[
\sum_{\{u,v\}\notin E} |d_u-d_v|
\ge \frac{1}{\Delta-\delta}\sum_{\{u,v\}\notin E} (d_u-d_v)^2.
\]
The sum of $(d_u-d_v)^2$ over all unordered vertex pairs equals $n M_1(G)-4m^2$ (a standard identity), while the sum over edges is $\sigma(G)$.
Hence
\[
\sum_{\{u,v\}\notin E} (d_u-d_v)^2 = \bigl(n M_1(G)-4m^2\bigr)-\sigma(G),
\]
and therefore
\begin{equation}\label{eq08}
\sum_{\{u,v\}\notin E} |d_u-d_v|
\ge \frac{\bigl(n M_1(G)-4m^2\bigr)-\sigma(G)}{\Delta-\delta}. 
\end{equation}
Now, $Alb(G)=irr_t(G)-\sum_{\{u,v\}\notin E}|d_u-d_v|$. Using \eqref{eq08} we obtain
\[
Alb(G) \le irr_t(G) - \frac{\bigl(n M_1(G)-4m^2\bigr)-\sigma(G)}{\Delta-\delta}.
\]
By Theorem~\ref{new:Alb-upper},
$DSO(G) \le \frac{m}{\sqrt{2}} + \frac{Alb(G)}{4\sqrt{2}\,\delta}$.
Substituting the upper bound on $Alb(G)$ gives the claimed inequality.
If $G$ is regular, the bound reduces to $m/\sqrt{2}$ which is exact.
\end{proof}
\begin{remark}
The  term
\[
\frac{\bigl(nM_1(G)-4m^2\bigr)-\sigma(G)}
{\Delta-\delta}
=
\frac{\sum_{\{u,v\}\notin E}(d_u-d_v)^2}
{\Delta-\delta}
\]
in Theorem~\ref{new:irr-upper-implicit}
is always nonnegative and is positive whenever there exists a non-edge
whose endpoints have distinct degrees.
Consequently, in such cases the theorem yields a strictly sharper bound than
\[
DSO(G)\le
\frac{m}{\sqrt2}
+\frac{irr_t(G)}{4\sqrt2\,\delta}.
\]
Regular graphs are treated separately in the theorem, where the exact identity
\[
DSO(G)=\frac{m}{\sqrt2}
\]
holds.
\end{remark}

In the last part of this section, we examine the relationship between the Diminished Sombor index and the original Sombor index.

\begin{theorem}\label{strong}
Let $G$ be a nontrivial graph. Then
\[
DSO(G)\ge \frac{SO(G)^2}{\sqrt{F(G)\,HM(G)}}.
\]
Equality holds if and only if $G$ is either a regular graph or a bipartite semiregular graph.
\end{theorem}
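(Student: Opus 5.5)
We will derive the inequality from a single application of the Cauchy--Schwarz inequality, with the factors chosen so that $SO(G)$ appears on the left and $DSO(G)$, $F(G)$ and $HM(G)$ appear on the right. For each edge $uv$, write $s_{uv}=\sqrt{d_u^2+d_v^2}$ and $t_{uv}=d_u+d_v$. Then
\[
SO(G)=\sum_{uv\in E(G)} s_{uv}=\sum_{uv\in E(G)}\sqrt{\frac{s_{uv}}{t_{uv}}}\cdot\sqrt{s_{uv}t_{uv}},
\]
so Cauchy--Schwarz gives
\[
SO(G)^2\le DSO(G)\cdot \sum_{uv\in E(G)} s_{uv}t_{uv}.
\]

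It remains to bound the mixed sum $\sum_{uv} s_{uv}t_{uv}$ by $\sqrt{F(G)\,HM(G)}$. This follows from a second Cauchy--Schwarz step:
\[
\sum_{uv} s_{uv}t_{uv}\le\Bigl(\sum_{uv} s_{uv}^2\Bigr)^{1/2}\Bigl(\sum_{uv} t_{uv}^2\Bigr)^{1/2}=\sqrt{F(G)\,HM(G)}.
\]
Chaining the two inequalities yields $SO(G)^2\le DSO(G)\sqrt{F(G)HM(G)}$, which is the claim. Since all quantities are positive for a nontrivial graph, the division is harmless.

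For the equality case, both Cauchy--Schwarz steps must be tight.
\begin{itemize}
\item In the first step, equality requires $\sqrt{s_{uv}/t_{uv}}$ to be proportional to $\sqrt{s_{uv}t_{uv}}$, that is, $t_{uv}=d_u+d_v$ must be constant over all edges.
\item In the second step, equality requires $s_{uv}$ to be proportional to $t_{uv}$. Combined with the first condition, this makes $s_{uv}$ constant, so $d_u^2+d_v^2$ is constant on edges.
\end{itemize}
By Lemma~\ref{const}, $G$ is then regular or bipartite semiregular. (Constancy of $d_u+d_v$ together with constancy of $d_u^2+d_v^2$ even fixes the unordered pair $\{d_u,d_v\}$, which agrees with this conclusion.)

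Conversely, in a regular or bipartite semiregular graph every edge has the same pair $\{d_u,d_v\}$, say $\{a,b\}$. Both sides then equal $m\sqrt{a^2+b^2}/(a+b)$, and a direct computation confirms equality.

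The only real obstacle is choosing the right splitting in the first Cauchy--Schwarz step. Once $SO(G)$ is written as $\sum\sqrt{s/t}\,\sqrt{st}$, the rest is routine, and the equality analysis reduces to Lemma~\ref{const}.
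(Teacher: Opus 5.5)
Your proof is correct and follows the paper's argument essentially line by line. Your splitting $s_{uv}=\sqrt{s_{uv}/t_{uv}}\cdot\sqrt{s_{uv}t_{uv}}$ is exactly the paper's Engel-form Cauchy--Schwarz step with $a_e=s_{uv}$ and $c_e=1/t_{uv}$, followed by the same second Cauchy--Schwarz bound by $\sqrt{F(G)\,HM(G)}$; your equality analysis (constant $d_u+d_v$ from the first step, hence constant $d_u^2+d_v^2$, then Lemma~\ref{const}) and your converse computation match the paper's as well.
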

\begin{proof}
For each edge $e=uv\in E$, define $a_e = \sqrt{d_u^2+d_v^2}$ and $c_e = 1/(d_u+d_v)$. Apply the Engel form of the Cauchy--Schwarz inequality:
\[
\Bigl(\sum_{e\in E} a_e\Bigr)^2 \le \Bigl(\sum_{e\in E} \frac{a_e}{c_e}\Bigr) \Bigl(\sum_{e\in E} a_e c_e\Bigr).
\]
Since $a_e/c_e = (d_u+d_v)\sqrt{d_u^2+d_v^2}$ and $a_e c_e = \frac{\sqrt{d_u^2+d_v^2}}{d_u+d_v}$, we obtain
\[
(SO(G))^2 \le \Bigl(\sum_{uv\in E(G)} (d_u+d_v)\sqrt{d_u^2+d_v^2}\Bigr) DSO(G).
\]
Now apply the standard Cauchy--Schwarz inequality to the sum on the right:
\[
\Bigl(\sum_{uv\in E(G)} (d_u+d_v)\sqrt{d_u^2+d_v^2}\Bigr)^2
\le \Bigl(\sum_{uv\in E(G)} (d_u+d_v)^2\Bigr) \Bigl(\sum_{uv\in E(G)} (d_u^2+d_v^2)\Bigr)
\]
\[
= HM(G)\,F(G).
\]
All terms nonnegative, hence taking square roots gives
\[
\sum_{uv\in E(G)} (d_u+d_v)\sqrt{d_u^2+d_v^2} \le \sqrt{HM(G)\,F(G)}.
\]
Therefore,
\[
(SO(G))^2 \le \sqrt{HM(G)\,F(G)}\; DSO(G),
\]
which rearranges to the required bound.

Equality in the Engel inequality holds if and only if
$\sqrt{\frac{a_e}{c_e}}$ is proportional to $\sqrt{a_ec_e}$ if and only if $\frac{1}{c_e}$ is constant.
This is equivalent to saying that
$d_u+d_v$ is constant over all edges. 
Equality in the second Cauchy--Schwarz inequality holds if and only if
$\frac{d_u^2+d_v^2}{(d_u+d_v)^2}$ is
constant over all edges.
Combining these two facts shows that equality holds if and only if both
$d_u+d_v$
and
$d_u^2+d_v^2$
are constant on $E(G)$.
In particular,  by Lemma~\ref{const} the graph is either regular or bipartite semiregular.
Conversely, if $G$ is regular or bipartite semiregular, then both
$d_u+d_v$ and $d_u^2+d_v^2$ are constant on all edges.
Hence equality holds in the Engel inequality 
and in the second Cauchy--Schwarz inequality.
Thus all intermediate inequalities become equalities, and the bound is attained.
\end{proof}
\begin{corollary}\label{weak}
For every nontrivial graph $G$,
\[
DSO(G)\ge \frac{SO(G)^2}{\sqrt{2}\,F(G)}.
\]
Equality holds if and only if $G$ is regular.
\end{corollary}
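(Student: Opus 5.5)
The plan is to derive the corollary directly from Theorem~\ref{strong} by comparing the two denominators. It suffices to show that
\[
\sqrt{F(G)\,HM(G)} \le \sqrt{2}\,F(G),
\]
which is equivalent to $HM(G)\le 2F(G)$. This holds edgewise: for every edge $uv$, $(d_u+d_v)^2 \le 2(d_u^2+d_v^2)$, since the difference equals $(d_u-d_v)^2\ge 0$. Summing over all edges gives $HM(G)\le 2F(G)$. We then have
\[
DSO(G)\ge \frac{SO(G)^2}{\sqrt{F(G)\,HM(G)}}\ge \frac{SO(G)^2}{\sqrt{2}\,F(G)},
\]
where the second step uses $SO(G)^2>0$ because $G$ is nontrivial and connected, so $m\ge1$.

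For the equality characterization, suppose equality holds in the final bound. Then both inequalities in the chain are equalities. Equality in $HM(G)\le 2F(G)$ forces $(d_u-d_v)^2=0$ on every edge, so $d_u=d_v$ for all edges. Connectedness then makes all degrees equal, so $G$ is regular. This step is stronger than needed, since it already excludes the non-regular bipartite semiregular graphs that are allowed in Theorem~\ref{strong}.

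Conversely, let $G$ be $r$-regular. Then $SO(G)=mr\sqrt2$, $F(G)=2mr^2$ and $DSO(G)=m/\sqrt2$. The right-hand side becomes
\[
\frac{2m^2r^2}{\sqrt2\cdot 2mr^2}=\frac{m}{\sqrt2}=DSO(G),
\]
so equality holds.

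The only point needing care is the equality argument: the characterization must come from the strict part of $HM\le 2F$, and not merely from the equality condition of Theorem~\ref{strong}.
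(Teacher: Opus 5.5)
Your proposal is correct and matches the paper's proof: you derive the bound from Theorem~\ref{strong} via the edgewise inequality $(d_u+d_v)^2\le 2(d_u^2+d_v^2)$, which gives $HM(G)\le 2F(G)$, and you obtain regularity from equality in that step. You also check the converse explicitly for $r$-regular graphs and note that $SO(G)^2>0$ is needed to pass from equality of the bounds to $HM(G)=2F(G)$; both are small additions of rigor beyond the paper's version.
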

\begin{proof}
Since $(d_u+d_v)^2\le 2(d_u^2+d_v^2)$ for every edge $uv$, we have $HM(G)\le 2F(G)$, hence
\[
\sqrt{F(G)\,HM(G)}\le \sqrt{2}\,F(G).
\]
Therefore, from Theorem~\ref{strong},
\[
DSO(G)\ge \frac{SO(G)^2}{\sqrt{F(G)\,HM(G)}}
\ge \frac{SO(G)^2}{\sqrt{2}\,F(G)}.
\]
The equality conditions follow directly from those of Theorem~\ref{strong}, noting that equality in $HM(G)\le 2F(G)$ forces $d_u=d_v$ for every edge, i.e., $G$ is regular.
\end{proof}

\begin{remark}\label{remn}
It is worth emphasizing that even the weaker bound of Corollary~\ref{weak} improves the previously known lower bound
\[
DSO(G)\ge \frac{SO(G)}{2\Delta},
\]
which was established in Theorem 3.11 of \cite{5}. 
Indeed, since $d_u^2+d_v^2\le 2\Delta^2$ for each edge, we have
$
\sqrt{d_u^2+d_v^2}\ge \frac{d_u^2+d_v^2}{\sqrt{2}\,\Delta}.
$
Summing over all edges yields $SO(G)\ge \frac{F(G)}{\sqrt{2}\,\Delta}$. Consequently,
\[
\frac{SO(G)^2}{\sqrt{2}\,F(G)}
\ge \frac{SO(G)}{2\Delta}.
\]
Thus the bound of Corollary~\ref{weak}
dominates the bound from \cite{5}, with equality if and only if $G$ is regular. 
\end{remark}

\begin{proposition}\label{DSO-SO-upper}
Let $G$ be a graph with $m>0$ edges and minimum degree $\delta\ge 1$. Then
\[
DSO(G) \le \frac{SO(G)}{2\delta}.
\]
Equality holds if and only if $G$ is regular.
\end{proposition}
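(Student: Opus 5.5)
The plan is to argue edge by edge, exactly as in the upper bounds of Theorem~\ref{DSO-F} and Theorem~\ref{thm:DSO-sigma}. For every edge $uv\in E(G)$, both endpoints satisfy $d_u,d_v\ge\delta$. Hence $d_u+d_v\ge 2\delta$, and therefore
\[
\frac{\sqrt{d_u^2+d_v^2}}{d_u+d_v}\le \frac{\sqrt{d_u^2+d_v^2}}{2\delta}.
\]
Summing this over all edges gives
\[
DSO(G)\le \frac{1}{2\delta}\sum_{uv\in E(G)}\sqrt{d_u^2+d_v^2}=\frac{SO(G)}{2\delta},
\]
which is the claimed inequality. No Cauchy--Schwarz step is needed, because the numerator $\sqrt{d_u^2+d_v^2}$ is left intact and simply summed to $SO(G)$.

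For the equality case, I will note that equality in the sum forces equality in every edgewise inequality, since all terms are positive. Equality for a single edge means $d_u+d_v=2\delta$. Combined with $d_u,d_v\ge\delta$, this forces $d_u=d_v=\delta$.

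Thus every edge joins two vertices of degree $\delta$. Since $G$ is connected and $\delta\ge1$, every vertex lies on an edge, so every vertex has degree $\delta$ and $G$ is $\delta$-regular.

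Conversely, suppose $G$ is $r$-regular. Then $SO(G)=m\sqrt2\,r$ and $\delta=r$, so the right-hand side equals $m\sqrt2\,r/(2r)=m/\sqrt2=DSO(G)$.

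I do not expect a real obstacle here; the argument is a one-line edgewise estimate. The only point needing care is the equality analysis, specifically that $d_u+d_v=2\delta$ pins down both degrees as $\delta$, and that connectivity together with $\delta\ge1$ then propagates this to all vertices.
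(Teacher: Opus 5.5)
Your proposal is correct and follows the paper's proof essentially line for line: the edgewise estimate $d_u+d_v\ge 2\delta$, summation to $SO(G)/(2\delta)$, the equality analysis forcing $d_u=d_v=\delta$ on every edge, and the same check for $r$-regular graphs. One small wording fix: equality of the sums forces edgewise equality because each edgewise difference is nonnegative, not because the terms themselves are positive.
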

\begin{proof}
For every edge $uv$, $d_u+d_v \ge 2\delta$ implies $\frac{1}{d_u+d_v} \le \frac{1}{2\delta}$. Hence
\[
\frac{\sqrt{d_u^2+d_v^2}}{d_u+d_v} \le \frac{\sqrt{d_u^2+d_v^2}}{2\delta},
\]
and summing yields the bound. Equality forces $d_u+d_v = 2\delta$ for all edges, which together with $d_u,d_v\ge \delta$ gives $d_u=d_v=\delta$; thus $G$ is $\delta$-regular. Conversely, if $G$ is regular of degree $r$, then $\delta=r$,
$SO(G)=m\sqrt{2r^2}=mr\sqrt{2}$, and $DSO(G)=m/\sqrt{2}$.
The right‑hand side equals $(mr\sqrt{2})/(2r)=m/\sqrt{2}$, so equality holds.
\end{proof}

\begin{remark}\label{rem:SO-F-HM-chain}
By the Cauchy--Schwarz inequality,
\[
SO(G)^2 = \Bigl(\sum_{uv\in E(G)} 1\cdot\sqrt{d_u^2+d_v^2}\Bigr)^2
        \le m\sum_{uv\in E(G)}(d_u^2+d_v^2) = m\,F(G),
\]
with equality if and only if $\sqrt{d_u^2+d_v^2}$ is constant over all edges.
Lemma~\ref{const} implies  that $G$ is either regular or bipartite semiregular.
Combining this with Proposition~\ref{DSO-SO-upper} yields
\[
DSO(G) \le \frac{SO(G)}{2\delta} \le \frac{\sqrt{m\,F(G)}}{2\delta}.
\]
The right‑hand side is precisely the upper bound of Theorem~\ref{DSO-F}.  
Furthermore, because every edge has $d_u+d_v \ge 2$, we have $HM(G) = \sum (d_u+d_v)^2 \ge 4m$ (equality if and only if $G\cong K_2$). Thus
\[
\frac{\sqrt{m\,F(G)}}{2\delta} \le \frac{\sqrt{F(G)\,HM(G)}}{4\delta}.
\]
Putting everything together we obtain the chain
\[
DSO(G) \le \frac{SO(G)}{2\delta}
      \le \frac{\sqrt{m\,F(G)}}{2\delta}
      \le \frac{\sqrt{F(G)\,HM(G)}}{4\delta},
\]
where the first inequality is sharp exactly for regular graphs, the second when
$G$ is either regular or bipartite semiregular, and the third when $G\simeq K_2$.
Thus the upper bound of Theorem~\ref{DSO-F} appears as a natural intermediate step linking the Sombor index, the forgotten index, and the hyper‑Zagreb index.
\end{remark}

\begin{remark}
Together with Theorem~\ref{strong}, we now have the two‑sided estimate
\[
\frac{SO(G)^2}{\sqrt{F(G)\,HM(G)}} \le DSO(G) \le \frac{SO(G)}{2\delta},
\]
where both bounds are sharp for regular graphs. The upper bound can be weakened to involve the forgotten and hyper‑Zagreb indices alone, as shown in the chain above, at the cost of losing sharpness for regular graphs of degree larger than $1$.
\end{remark}


\begin{thebibliography}{00}

\bibitem{25}
H. Abdo, S. Brandt, D. Dimitrov, The total irregularity of a graph, \textit{Discrete Math. Theor. Comput. Sci.}, \textbf{16} (2014), 201–206.

\bibitem{20}
M. O. Albertson, The irregularity of graph, \textit{Ars Comb.}, \textbf{46} (1997), 219–225.

\bibitem{33}
A. M. Alotaibi, A. M. Alanazi, T. S. Hassan, A. Ali, On the diminished Sombor index of fixed-order molecular graphs with cyclomatic number at least 3, \textit{MATCH Commun. Math. Comput. Chem.}, \textbf{95} (2026), 813–828.

\bibitem{7}
J. A. Bondy, U. S. R. Murty, \textit{Graph Theory}, Springer, London (2008).

\bibitem{4}
A. E. Brouwer, W. H. Haemers, Spectra of graphs, \textit{Springer Science and Business Media}, (2011).

\bibitem{8}
G. Chartrand, L. Lesniak, P. Zhang, Graphs and Digraphs, Sixth Edition, \textit{CRC Press, Boca Raton} (2016).

\bibitem{9}
R. Diestel, Graph Theory, Third Edition, \textit{Springer, New York} (2005).

\bibitem{14}
T. Došlić, T. Réti, A. Ali, On the structure of graphs with integer Sombor indices, \textit{Discrete Math. Lett.}, \textbf{7} (2021), 1–4.

\bibitem{22}
S. Fajtlowicz, On conjectures of Graffiti-II, \textit{Congr. Numer.}, \textbf{60} (1987), 187–197.

\bibitem{21}
G. H. Fath-Tabar, Old and new Zagreb indices of graphs, \textit{MATCH Commun. Math. Comput. Chem.}, \textbf{65}(1) (2011), 79–84.

\bibitem{24}
B. Furtula, I. Gutman, A forgotten topological index, \textit{J. Math. Chem.}, \textbf{53} (2015), 1184–1190.

\bibitem{31}
F. Guo, F. Wang, Maximum diminished Sombor index of molecular trees with a perfect matching, \textit{arXiv:2512.19710} (2025).

\bibitem{10}
I. Gutman, B. Furtula, C. Elphick, Three new/old vertex–degree–based topological indices, \textit{MATCH Commun. Math. Comput. Chem.}, \textbf{72} (2014), 617–682.

\bibitem{29}
I. Gutman, M. Togan, A. Yurttas, A. S. Cevik, I. N. Cangul, Inverse problem for sigma index, \textit{MATCH Commun. Math. Comput. Chem.}, \textbf{79} (2018), 491–508.


\bibitem{28}
I. Gutman, Geometric approach to degree-based topological indices: Sombor indices, \textit{MATCH Commun. Math. Comput. Chem.}, \textbf{86}(1) (2021), 11–16.

\bibitem{15}
I. Gutman, N. K. Gürsoy, A. Gürsoy, A. Ülker, New bounds on Sombor index, \textit{Commun. Comb. Optim.}, \textbf{8} (2023), 305–311.

\bibitem{11}
B. Liu, Z. You, A survey on comparing Zagreb indices, \textit{MATCH Commun. Math. Comput. Chem.}, \textbf{65} (2011), 581–593.

\bibitem{16}
I. Milovanović, E. Milovanović, A. Ali, M. Matejić, Some results on the Sombor indices of graphs, \textit{Contrib. Math.}, \textbf{3} (2021), 59–67.

\bibitem{5}
F. Movahedi, Diminished Sombor index and its relationship with topological indices, \textit{Filomat}, \textbf{39}(29) (2025), 10519–10532.

\bibitem{3}
F. Movahedi, I. Gutman, I. Redžepović, B. Furtula, Diminished Sombor index, \textit{MATCH Commun. Math. Comput. Chem.}, \textbf{95} (2026), 141–162.

\bibitem{30}
F. Movahedi, Maximal Diminished Sombor index of tricyclic graphs, \textit{MATCH Commun. Math. Comput. Chem.}, \textbf{95} (2026), 935--939.

\bibitem{17}
C. Phanjoubam, S. M. Mawiong, A. M. Buhphang, On Sombor coindex of graphs, \textit{Commun. Comb. Optim.}, \textbf{8} (2023), 513–529.

\bibitem{13}
D. T. Rajathagiri, Enhanced mathematical models for the Sombor index: Reduced and co-Sombor index perspectives, \textit{Data Anal. Artif. Intell.}, \textbf{1}(2) (2021), 215–228.

\bibitem{18}
M. Randić, Characterization of molecular branching, \textit{J. Am. Chem. Soc.}, \textbf{97}(23) (1975), 6609–6615.

\bibitem{26}
G. H. Shirdel, H. Rezapour, A. M. Sayadi, The Hyper-Zagreb index of graph operations, \textit{Iran. J. Math. Chem.}, \textbf{4}(2) (2013), 213–220.

\bibitem{19}
D. Vukičević, B. Furtula, Topological index based on the ratios of geometrical and arithmetical means of end-vertex degrees of edges, \textit{J. Math. Chem.}, \textbf{46} (2009), 1369–1376.

\bibitem{23}
D. Vukičević, M. Gašperov, Bond additive modeling 1. Adriatic indices, \textit{Croat. Chem. Acta.}, \textbf{83} (2010), 243–260.


\end{thebibliography}
\end{document}